\journal{Topology and its Aplications}
\begin{document}
\sloppy

\theoremstyle{theorem}
\newtheorem{theorem}{Theorem}[section]
\newtheorem{lemma}[theorem]{Lemma}
\newtheorem{proposition}[theorem]{Proposition}
\newtheorem{corollary}[theorem]{Corollary}
\newtheorem{remark}[theorem]{Remark}

\theoremstyle{definition}
\newtheorem{definition}{Definition}
\newtheorem{example}{Example}
\newtheorem{problem}{Problem}

\begin{frontmatter}


 
\title{Compact subspaces of the space of separately continuous functions with the cross-uniform topology\tnoteref{Grant}}
\tnotetext[Grant]{
	The research activities co-financed by the funds granted under the Research
	Excellence Initiative of the University of Silesia in Katowice
	}

\author[us,chnu]{Oleksandr Maslyuchenko\corref{CorrAuthor}} 
\ead{ovmasl@gmail.com}
\cortext[CorrAuthor]{Corresponding author}
\author[chnu]{Vadym Myronyk}
\ead{v.myronyk@chnu.edu.ua}
\author[chnu]{Roman Ivasiuk}
\ead{ivasiuk.roman@chnu.edu.ua}
\affiliation[us]
{organization={Institute of Mathematics, University of Silesia in Katowice},
addressline={Bankowa 12}, 
            city={Katowice},
            postcode={40-007}, 
            country={Poland}}
\affiliation[chnu]
{organization={Department of Mathematics and Informatics, Yuriy Fedkovych Chernivtsi National University},
	addressline={Kotsiubynskoho  2}, 
	city={Chernivtsi},
	postcode={58012}, 
	country={Ukraine}}
\begin{abstract}
We consider two natural topologies on the space $S(X\times Y,Z)$ of all separately continuous  functions defined on the product of two topological spaces $X$ and $Y$ and ranged into a topological or metric space $X$. These topologies are the cross-open topology and the cross-uniform topology. We show that these topologies coincides if $X$ and $Y$ are pseudocompacts and $Z$ is a metric space. We prove that a compact space $K$  embeds into $S(X\times Y,Z)$ for infinite compacts $X$, $Y$ and a  metrizable space $Z\supseteq\mathbb{R}$ if and only if  the weight of $K$ is less than the sharp cellularity of both spaces $X$ and $Y$.
\end{abstract}



\begin{keyword}
set-open topology \sep 
set-uniform topology \sep
pointwise topology \sep 
cross-open topology \sep 
cross-uniform topology \sep 
separately continuous function \sep 
space of  separately continuous functions \sep 
space of continuous functions \sep 
Eberlein compact \sep 
Rosenthal compact \sep
Eberlein space \sep 
compact space \sep
cellularity \sep
sharp cellularity


\MSC[2020] 
54C35 
\sep
54D30 
\sep
54A25 
\sep
54C08 
\sep
54C25 
\end{keyword}

\end{frontmatter}



\section{Introduction}

In \cite{VMa} the authors proposed a natural topologization of the space  of all separately continuous functions $s:[0;1]^2\to \mathbb{R}$ which was called the \textit{topology of the layered uniform convergence}. This topology can be considered on the space $S=S(X\times Y,Z)$ of all separately continuous functions ${s:X\times Y\to Z}$ for any topological spaces $X$ and $Y$ and a metric space $(Z,d)$. A base of this topology is given by the sets
$\big\{t\in S:\forall p\in\mathrm{cr}E\ \big|\  d\big(s(p),t(p)\big)<\varepsilon\big\}$,
where $E$ is a finite subset of $X\times Y$,  $\mathrm{cr}E=(X\times\mathrm{pr}_Y(E))\cup(\mathrm{pr}_X(E)\times Y)$ is the \textit{cross} of the set $E$, $\varepsilon>0$ and $s\in S(X\times Y)$. We call this topology the \textit{cross-uniform topology}. Another natural topology on $S$ is generated by the subbase consisting of the sets  $\big\{s\in S:s(A)\subseteq W\big\}$, where $A=\overline{G}\cap C$, $G$ is open in $C=\mathrm{cr}\{p\}$, $p\in X\times Y$ and $W$ is open in a topological space $Z$. It is called the \textit{cross-open topology} and we always endow the space $S$ with this topology. We prove that these two topologies coincide in the case where $X$ and $Y$ are pseudocompacts and $d$ is a metric generating the topology of a metrizable space $Z$.

If $X$ and $Y$ are compacts then $S(X\times Y)=S(X\times Y,\mathbb{R})$ is a topological vector space.
In \cite{VMa} it was  proved that $S([0,1]^2)$ is a separable non-metrizable complete topological vector space, and the authors asked about the other properties of this space.
In \cite{VMaM1,VMaM2,VMaM3}  it was proved that    $S(X\times Y)$ is a meager, complete, barreled and bornological topological vector space for any compacts $X$ and $Y$  without isolated points.

Another intrigued question on the space $S(X\times Y)$ is the problem of description of compact subspaces of $S(X\times Y)$ for any compacts $X$ and $Y$.
Compact subspaces of $B_1(X)$ (= the space of all Baire one functions with the pointwise topology) are the so-called \textit{Rosenthal compacts} if $X$ is a Polish space.
Since $S([0,1]^2)\subseteq B_1([0,1]^2)$ and every Baire one function on the diagonal $\Delta=\{(x,x):x\in[0;1]\}$ can be extended to a separately continuous function on $[0;1]^2$, we expected the appearance of some Rosenthal type compacts. But it turns out that the topological structure of compact subspaces
of $S(X\times Y)$ is simpler. For example, we prove that only a metrizable compact embeds into $S([0;1]^2)$. Moreover, we characterize compact subspaces of the space $S(X\times Y, Z)$.
Let $w(X)$ denote the weight of a topological space $X$
and  let $c(X)$ denote  the cellularity of $X$. The \textit{sharp cellularity} $c^\sharp(X)$ is defined by
$c^\sharp(X)=\sup\big\{|\mathcal U|^+:\mathcal U\mbox{ is
	a disjoint family of open sets in }X\big\}.$
In this paper we prove the following result: a compact space $K$  embeds into $S(X\times Y,Z)$ for infinite compacts $X$, $Y$ and a metrizable space $Z\supseteq\mathbb{R}$ if and only if  ${w(K)<\min\{c^\sharp(X),c^\sharp(Y)\}}$. The previous versions of this result were announced in \cite{Mconf, MIv1,MIv2}.

\section{Topologizations of the space of separately continuous functions}

Let $P$ be a set, $\alpha\subseteq 2^P$ and $Z$ be a topological space. Consider a set $A\in\alpha$ and an open set $W$ in $Z$. Denote
${\widetilde{O}_{A,W}=\big\{u\in Z^P:u(A)\subseteq W\big\}}.
$
The \mbox{\textit{$\alpha$-open topology}} is, by definition, the topology
 on $Z^P$ which is generated by the subbase 
${\big\{\widetilde O_{A,W}:A\in\alpha,\  W \text{ is an open subset of }Z \big\}}$.

Another natural method of topologization of the space of  functions concerns  the case where $Z$ is a metric space. So, let $(Z,d)$ be a metric space. The open and closed balls in this space we denote $B(z,\varepsilon)$ and $B[z,\varepsilon]$ respectively.
Consider $A\in\alpha$, $u\in Z^P$, ${\varepsilon>0}$. Denote 
$
{\widetilde B_A(u,\varepsilon)=\big\{v\in Z^P:d\big(u(x),v(x)\big)<\varepsilon\text{ for any }x\in A\big\}.}
$
The \textit{$\alpha$-uniform topology} is, by definition, the topology on $P^Z$ which is generated by the neighborhood subbase 
$
{\big\{\widetilde B_A(u,\varepsilon):A\in\alpha, \varepsilon>0 \big\}.}
$
This topology is generated by the family of extended pseudometrics ${\varrho_A:Z^P\times Z^P\to[0;+\infty]}$, 
${\varrho_A(u,v)=\sup\limits_{p\in A}d\big(u(p),v(p)\big)}$, $u,v\in Z^P$,
 $A\in\alpha$.

Suppose that $X$, $Y$ and $Z$ are topological spaces and $P=X\times Y$ is the product of topological spaces $X$ and $Y$. Let $f:P\to Z$ be a function and the functions $f^x:Y\to Z$ and $f_y:X\to Z$ be given by $f^x(y)=f_y(x)=f(x,y)$ for any $x\in X$ and $y\in Y$. 
A function $f$ is called \textit{separately continuous} if $f^x$ and $f_y$ are continuous for any $x\in X$ and $y\in Y$. The \textit{separate continuity} of a function $f:\prod_{i=1}^n X_i\to Z$ of several variables  means the continuity of the functions $f_{a,k}:\prod_{i\ne k}X_i\to Z$, $f_{a,k}(t)=f(a_1,\dots,a_{k-1},t,a_{k+1},\dots, a_n)$, $t\in X_k$ for any $a=(a_i)_{i=1}^n\in \prod_{i=1}^nX_i$ and $k=1,2,\dots, n$. 

Denote $S(X\times Y, Z)$ the set of all separately continuous functions ${s:X\times Y\to Z}$. Let $\mathrm{pr}_X:P\to X$ and $\mathrm{pr}_Y:P\to Y$ be the coordinate projections. That is $\mathrm{pr}_X(p)=x$ and $\mathrm{pr}_Y(p)=y$ for any $p=(x,y)\in P$. By the \textit{cross of a set} $E\subseteq P$ we mean the set
$$
\mathrm{cr}\,E=\big(\mathrm{pr}_X(E)\times Y\big)\cup\big(X\times \mathrm{pr}_Y(E)\big).
$$
Denote
\begin{align*}
	\mathcal{C}r(P)&=\Big\{\mathrm{cr}\{p\}:p\in P\Big\}\ \text{ and }\\
	\mathcal{C}ro(P)&=\Big\{\overline{G}\cap C:\text{$C\in\mathcal{C}r(P)$, $G$ is an open  set in $C$}\Big\}
\end{align*}
By the \textit{cross-open topology} we mean the restriction of the $\mathcal{C}ro(P)$-open topology to $S(P,Z)$. The space $S(P,Z)$ with the cross-open topology we denote by $S_{cro}(P,Z)$. Let
\begin{align*}
	O_{A,W}&=\widetilde{O}_{A,W}\cap S(P,Z)=\Big\{s\in S(P,Z):s(A)\subseteq W\Big\},\\
	\mathcal{P}_{cro}&=\Big\{O_{A,W}:A\in\mathcal{C}ro(P), W \text{ is an open subset of }Z\Big\}\ \ \text{and}\\
	\mathcal{B}_{cro}&=\Big\{\bigcap_{k=1}^n O_k:n\in\mathbb{N}\text{ and }O_1,O_2,\dots,O_n\in\mathcal{P}_{cro}\Big\}.
\end{align*}
Thus, $\mathcal{B}_{cro}$ is a base of the cross-open topology.

Let $(Z,d)$ be a metric space.   By the \textit{cross-uniform topology} we mean the restriction of the $\mathcal{C}r(P)$-uniform topology to $S(P,Z)$. The space $S(P,Z)$ with the cross-uniform topology we denote by $S_{cru}(P,Z)$. Set 
\begin{align*}
	B_A(s,\varepsilon)&=\widetilde B_A(s,\varepsilon)\cap S(P,Z)\\
	&=\Big\{t\in S(P,Z):d\big(s(x),t(x)\big)<\varepsilon\text{ for any }x\in A\Big\},\\
	\mathcal{P}_{cru}(s)&=\Big\{B_A(s,\varepsilon):A\in\mathcal{C}r(P), \varepsilon>0 \Big\}
	\ \ \text{and}\\
	\mathcal{B}_{cru}(s)&=\Big\{\bigcap_{k=1}^n B_k:n\in\mathbb{N}\text{ and }B_1,B_2,\dots,B_n\in\mathcal{P}_{cru}(s)\Big\}
\end{align*}
for any $s\in S(P,Z)$. Thus, $\mathcal{B}_{cru}(f)$ is a neighborhood base of the cross-uniform topology. Observe that the cross-uniform topology is generated by the family of extended pseudometrics $d_c:S(P,Z)\times S(P,Z)\to [0;+\infty]$,
\begin{eqnarray}\label{equ:metric_dc}
d_c(s,t)&=&\sup\limits_{p\in\mathrm{cr}\{c\}}d\big(s(p),t(p)\big)\\
&=&\nonumber\sup\limits_{x=a\text{ or }y=b}d\big(s(x,y),t(x,y)\big)\\
&=&\nonumber\max\Big\{\sup\limits_{x\in X}d\big(s(x,b),t(x,b)\big),\sup\limits_{y\in Y}d\big(s(a,y),t(a,y)\big)\Big\},
\end{eqnarray}
where $s,t\in S(P,Z)$ and $c=(a,b)\in P$.

We prove below that in the case where $X$ and $Y$ are pseudocompact and $Z$ is a metric space these two topologies coincide. And so, in this case we will omit the indexes in the notation of these spaces and denote the space of all separately continuous functions equipped with this topology by $S(X\times Y, Z)$.     
\begin{proposition}\label{prop:cru=cro}
	Let $X$ and $Y$ be pseudocompact spaces, $Z$ be a metrizable space and $d$ be a metric which generates the topology of $Z$. Then the cross-uniform topology on $S(X\times Y,Z)$ coincides with the cross-open topology, that is $S_{cru}(X\times Y,Z)=S_{cro}(X\times Y,Z)$. 
\end{proposition}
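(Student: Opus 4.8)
The plan is to fix a point $c=(a,b)\in P$, write $C=\mathrm{cr}\{c\}=(\{a\}\times Y)\cup(X\times\{b\})$, denote by $\tau_{cro}$ and $\tau_{cru}$ the cross-open and cross-uniform topologies, and prove the two inclusions $\tau_{cro}\subseteq\tau_{cru}$ and $\tau_{cru}\subseteq\tau_{cro}$. First I would record two preliminary facts that drive everything. Since $C$ is the union of the two arms $\{a\}\times Y\cong Y$ and $X\times\{b\}\cong X$ and a finite union of pseudocompact subspaces is pseudocompact, $C$ is pseudocompact. Moreover the arms are closed in $C$ (using that $X,Y$ are $T_1$), so by the pasting lemma and separate continuity the restriction $s|_C$ is continuous for every $s\in S(P,Z)$; hence $s(C)$ is a continuous image of a pseudocompact space in a metric space, i.e. a pseudocompact metric space, i.e. \emph{compact}. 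The guiding idea is that $C$ itself is only pseudocompact, but its image $s(C)$ is genuinely compact, so I will push every covering argument into the range.

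For $\tau_{cro}\subseteq\tau_{cru}$ it suffices to show that each subbasic set $O_{A,W}$ is $\tau_{cru}$-open, where $A=\overline{G}\cap C=\mathrm{cl}_C G$ with $G$ open in $C$. Fixing $s\in O_{A,W}$, so $s(A)\subseteq W$, I would consider the continuous function $\varphi\colon C\to[0,+\infty)$, $\varphi(p)=d\big(s(p),Z\setminus W\big)$, which is strictly positive on $A$, and claim $\varepsilon:=\inf_{p\in A}\varphi(p)>0$. Granting this, $B_C(s,\varepsilon)\subseteq B_A(s,\varepsilon)\subseteq O_{A,W}$, because $A\subseteq C$ and $d(s(p),t(p))<\varepsilon\le\varphi(p)$ forces $t(p)\in W$ for all $p\in A$; since $C\in\mathcal{C}r(P)$, $B_C(s,\varepsilon)$ is a genuine $\tau_{cru}$-neighbourhood of $s$, so $O_{A,W}$ is $\tau_{cru}$-open. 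This positivity of $\varepsilon$ is the step I expect to be the main obstacle, precisely because pseudocompactness yields no convergent subsequences. My plan is to avoid sequences entirely: as $\varphi$ is continuous one has $\inf_A\varphi=\inf_G\varphi$, so if $\inf_G\varphi=0$ I set $U_n=\{p\in G:\varphi(p)<1/n\}$, a decreasing sequence of nonempty open subsets of $C$ with $\mathrm{cl}_C U_n\subseteq\mathrm{cl}_C G=A$. Since $C$ is pseudocompact, hence feebly compact, $\bigcap_n\mathrm{cl}_C U_n\neq\varnothing$; any $q$ in this intersection lies in $A$ yet satisfies $\varphi(q)\le 1/n$ for all $n$, so $\varphi(q)=0$ and $s(q)\notin W$, contradicting $q\in A$. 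Note that this argument uses in an essential way that $A$ is \emph{regular} closed, which is exactly why $\mathcal{C}ro(P)$ is built from sets of the form $\overline{G}\cap C$.

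For the reverse inclusion $\tau_{cru}\subseteq\tau_{cro}$ it suffices to show that each subbasic $\tau_{cru}$-neighbourhood $B_C(s,\varepsilon)$ contains a $\tau_{cro}$-neighbourhood of $s$. Exploiting compactness of $s(C)$, I would cover it by finitely many balls $B(z_i,\varepsilon/4)$, $i=1,\dots,m$, and set $G_i=(s|_C)^{-1}\big(B(z_i,\varepsilon/4)\big)$ (open in $C$), $B_i=\mathrm{cl}_C G_i\in\mathcal{C}ro(P)$, and $W_i=B(z_i,\varepsilon/2)$. Continuity of $s|_C$ gives $s(B_i)\subseteq\overline{B(z_i,\varepsilon/4)}\subseteq W_i$, so $s\in\bigcap_{i=1}^m O_{B_i,W_i}\in\mathcal{B}_{cro}$, while the $G_i$, hence the $B_i$, cover $C$. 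Then for any $t\in\bigcap_i O_{B_i,W_i}$ and any $p\in C$, choosing $i$ with $p\in B_i$ yields $d(s(p),t(p))\le d(s(p),z_i)+d(z_i,t(p))<\varepsilon/4+\varepsilon/2<\varepsilon$, so $t\in B_C(s,\varepsilon)$ and thus $\bigcap_i O_{B_i,W_i}\subseteq B_C(s,\varepsilon)$.

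The common thread, and the reason mere pseudocompactness suffices where one might expect to need compactness, is that a finite subcover of the only-pseudocompact domain $C$ need not exist, but a finite subcover of the compact range $s(C)$ always does; pulling it back through $s|_C$ produces the finitely many admissible regular-closed sets $B_i$. Combining the two inclusions gives $\tau_{cro}=\tau_{cru}$, that is $S_{cro}(X\times Y,Z)=S_{cru}(X\times Y,Z)$.
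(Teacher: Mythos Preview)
Your proof is correct, and the second inclusion ($\tau_{cru}\subseteq\tau_{cro}$) is essentially identical to the paper's: both cover the compact image $s(C)$ by finitely many small balls, pull them back to regular-closed subsets of $C$, and finish with the triangle inequality (the constants $\varepsilon/4,\varepsilon/2$ versus $\varepsilon/3,\varepsilon/2$ are immaterial).

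For the first inclusion ($\tau_{cro}\subseteq\tau_{cru}$) the target is the same---a single $\varepsilon>0$ with $B_C(s,\varepsilon)\subseteq O_{A,W}$---but the justification differs. The paper observes that a regular-closed subset $A=\mathrm{cl}_C G$ of the pseudocompact space $C$ is itself pseudocompact, so $s(A)$ is a pseudocompact, hence compact, subset of the metrizable space $Z$ lying inside the open set $W$; the positive $\varepsilon$ then comes from compactness of $s(A)$. You instead work in the domain with the distance function $\varphi(p)=d(s(p),Z\setminus W)$ and invoke feeble compactness of $C$ (equivalent to pseudocompactness for Tychonoff spaces) to rule out $\inf_G\varphi=0$. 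Both arguments are valid and both rely in an essential way on $A$ being regular-closed; the paper's version is a bit more streamlined because it reuses the same ``push compactness into the metric range'' trick you already use for the reverse inclusion, while your version has the virtue of making explicit where regularity of $A$ enters.
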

\begin{proof}
	Firstly, we prove that the cross-open topology is weaker than the cross-uniform topology. Let $P=X\times Y$, $c=(a,b)\in P$, $C=\mathrm{cr}\{c\}$, $G$ be an  open set in $C$, $K=\overline{G}\cap C$ and $W$ be an open subset of $Z$. Let us prove that ${O=O_{K,W}}$ is open in $S_{cru}(P,Z)$. Consider $s\in O$. Since $s$ is separately continuous, the restriction $s|_{C}$ is continuous. Observe that $C$ is pseudocompact and $K$ is the closure of an open set $G$ in $C$. By \cite[3.10.23]{En} one can easily prove that $K$ is a pseudocompact. Therefore, $s(K)$ is pseudocompact subset of a metrizable space $Z$. Then \cite[3.10.21, 4.1.17]{En} implies the compactness of $s(K)$. But $s(K)\subseteq W$. Thus, there is $\varepsilon>0$ such that 
	$B\big(s(p),\varepsilon\big)\subseteq W$ for any $p\in K$. Denote $U=B_C(s,\varepsilon)$ and prove that $U\subseteq G$. Let $t\in U$. Then $d\big(s(p),t(p)\big)<\varepsilon$ for any $p\in C$. Hence, $t(p)\in B\big(s(p),\varepsilon\big)\subseteq W$ for any $p\in K$. Therefore, $t(K)\subseteq G$. Consequently, $t\in O$. Thus, $G$ is open in $S_{cru}(P,Z)$. 
	
	Secondly, we prove that the cross-uniform topology is weaker than the cross-open topology. Let $c=(a,b)\in P$, $C=\mathrm{cr}\{c\}$, $s\in S(P,Z)$, $\varepsilon>0$ and $U=B_C(s,\varepsilon)$. Since $X$ and $Y$ are pseudocompact spaces, we conclude that $C$ is pseudocompact as well. But the separate continuity of $s$ implies that the restriction $s|_{C}$ is continuous. Therefore, $s(C)$ is a pseudocompact subset of $Z$. Then $s(C)$ is compact by \cite[3.10.21, 4.1.17]{En}. Set $B_p=B\big(s(p),\frac{\varepsilon}{3}\big)$ for any $p\in C$. Then $\big\{B_p:p\in C\big\}$ is an open covering of the compact set $s(C)$. Hence, there are $n\in\mathbb{N}$ and $p_1,p_2,\dots,p_n\in C$ such that $s(C)\subseteq\bigcup\limits_{i=1}^nB_{p_i}$. Put 
	$G_i=C\cap s^{-1}\big(B_{p_i}\big)$ and $K_i=\overline{G}_i\cap C$
	for any $i=1,2,\dots, n$. Since $s|_C$ is continuous, $G_i$ is open in $C$. Therefore, $K_i\in\mathcal{C}ro(P)$ and $s(K_i)\subseteq \overline{s(G_i)}\subseteq \overline{B}_{p_i}$ for any $i=1,2,\dots, n$. Let $W_i=B\big(s(p_i),\frac{\varepsilon}{2}\big)$ and $O_i=O_{K_i,W_i}$ for any $i=1,2,\dots, n$. Then the set $O=\bigcap\limits_{i=1}^n O_i$ is open in the cross-open topology. Since $s(K_i)\subseteq \overline{B}_{p_i}\subseteq W_i$ for any $i$, we conclude $s\in O$. So, $O$ is an open neighborhood of $s$ in the cross-open topology. To prove $O\subseteq U$ we pick $t\in O$. Then $t(K_i)\subseteq W_i$ for any $i$. On the other hand $s(K_i)\subseteq W_i$ as well. Consider $p\in C$. Since $s(p)\in s(C)\subseteq\bigcup\limits_{i=1}^nB_{p_i}$, there is $i$ with $s(p)\in B_{p_i}$. Then $p\in K_i$. Therefore,  $t(p)\in t(K_i)\subseteq W_i$ and $s(p)\in B_{p_i}\subseteq W_i$. Consequently, 
	$$d\big(s(p),t(p)\big)\le d\big(s(p),s(p_i)\big)+d\big(s(p_i),t(p)\big)<\tfrac{\varepsilon}{2}+\tfrac{\varepsilon}{2}=\varepsilon.$$
	Hence, $s\in U$. Thus, we have proved the inclusion $O\subseteq U$ and so $U$ is open in the cross-open topology.
\end{proof}

\section{The evaluation function and meagerness of  $S_{cro}(X\times Y,Z)$}

\begin{proposition}\label{prop:sep_cont_of_evaluation_function}
	Let $X$, $Y$ and $Z$ be topological spaces such that $Z$ is regular, $S=S_{cro}(X\times Y, Z)$, $\widetilde{X}=X\times S$, $\widetilde{Y}=Y\times S$. Define the evaluation functions $e:X\times Y\times S\to Z$, $e_1:\widetilde{X}\times Y\to Z$ and $e_2:X\times \widetilde{Y}\to Z$ by setting
	$e(x,y,s)=e_1(\tilde x,y)=e_2(x,\tilde y)=s(x,y),
	$
	where $x\in X$, $y\in Y$, $s\in S$, $\tilde x=(x,s)$ and $\tilde y=(y,s)$. Then $e$, $e_1$, $e_2$ are separately continuous functions. 
	
	Moreover, if $X$ and $Y$ are completely regular spaces without isolated points and $Z$ is a nontrivial Hausdorff topological vector space then  $e$, $e_1$, $e_2$ are everywhere discontinuous functions.
\end{proposition}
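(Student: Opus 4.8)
The plan is to treat the two assertions separately, reducing both halves of the continuity statement to the fact (already used above) that $s|_{\mathrm{cr}\{c\}}$ is continuous for a separately continuous $s$. For $e$, continuity in the two ``pure'' variables is immediate: the maps $x\mapsto s_0(x,y_0)=(s_0)_{y_0}(x)$ and $y\mapsto s_0(x_0,y)=(s_0)^{x_0}(y)$ are continuous simply because $s_0$ is separately continuous. So everything reduces to continuity in the function variable $s$, and --- for $e_1$ and $e_2$ --- to the joint continuity of $(x,s)\mapsto s(x,y_0)$ and of $(y,s)\mapsto s(x_0,y)$.

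I would handle all of these by one argument. Fix $(x_0,y_0)$ and an open $W\ni s_0(x_0,y_0)$; using regularity of $Z$, choose an open $V$ with $s_0(x_0,y_0)\in V\subseteq\overline V\subseteq W$. With $C=\mathrm{cr}\{(x_0,y_0)\}$, set $G=\{q\in C:s_0(q)\in V\}$, which is open in $C$ because $s_0|_C$ is continuous, and $A=\overline G\cap C\in\mathcal{C}ro(P)$. Then $s_0(A)\subseteq\overline{s_0(G)}\subseteq\overline V\subseteq W$, so $s_0\in O_{A,W}$, and since $(x_0,y_0)\in G\subseteq A$ every $t\in O_{A,W}$ satisfies $t(x_0,y_0)\in W$; this gives continuity in $s$. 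For the joint statement I would additionally put $G_0=\{x\in X:s_0(x,y_0)\in V\}$, open in $X$ and containing $x_0$; for $x\in G_0$ the point $(x,y_0)$ lies in $G\subseteq A$, so on the neighbourhood $G_0\times O_{A,W}$ of $(x_0,s_0)$ one has $s(x,y_0)\in W$. The symmetric choice settles $e_2$, completing the separate continuity.

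For the discontinuity I would fix an arbitrary $(x_0,y_0,s_0)$ (the cases of $e_1,e_2$ being identical) and exhibit one target neighbourhood that is never captured. A basic neighbourhood of $(x_0,y_0,s_0)$ has the form $U\times V\times\mathcal O$ with $\mathcal O=\bigcap_{i=1}^n O_{A_i,W_i}$, where $A_i\subseteq C_i=\mathrm{cr}\{(a_i,b_i)\}$ and $s_0(A_i)\subseteq W_i$. Because $X$ and $Y$ are $T_1$ without isolated points, every nonempty open subset of them is infinite, so I can choose $x^*\in U\setminus\{a_1,\dots,a_n\}$ and $y^*\in V\setminus\{b_1,\dots,b_n\}$; complete regularity then yields continuous $\varphi\colon X\to[0,1]$ and $\psi\colon Y\to[0,1]$ with $\varphi(x^*)=\psi(y^*)=1$ and $\varphi(a_i)=\psi(b_i)=0$ for all $i$. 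Fixing a nonzero $z_0\in Z$, the function $h(x,y)=\varphi(x)\psi(y)z_0$ is separately continuous, vanishes on each $C_i$ (hence on each $A_i$), and satisfies $h(x^*,y^*)=z_0$; thus $s_0+h\in\mathcal O$ while $(s_0+h)(x^*,y^*)=s_0(x^*,y^*)+z_0$.

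The escape step is where I expect the only genuine care to be needed. Using the Hausdorff topological-vector-space structure I would select, once and for all, a neighbourhood $W_0$ of $0$ with $z_0\notin W_0-W_0$, and set $W=s_0(x_0,y_0)+W_0$, a neighbourhood of $e(x_0,y_0,s_0)$. For any given $U\times V\times\mathcal O$ the two admissible functions $s_0$ and $s_0+h$ take at $(x^*,y^*)$ values differing exactly by $z_0$, so they cannot both lie in $W$; hence $e(U\times V\times\mathcal O)\not\subseteq W$ for every neighbourhood, and $e$ is discontinuous at $(x_0,y_0,s_0)$. The conceptual heart of the matter --- and the main obstacle to organise cleanly --- is that a cross-open neighbourhood constrains a function only on finitely many crosses, whereas the no-isolated-point and complete-regularity hypotheses let me insert an arbitrarily large value at an off-cross point; the translate-free neighbourhood $W_0$ is precisely what converts this gap between ``control on crosses'' and ``control at an off-cross point'' into genuine discontinuity.
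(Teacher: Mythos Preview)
Your proof is correct and follows the same overall strategy as the paper's. For separate continuity both arguments use regularity of $Z$ to shrink the target neighbourhood and then the continuity of $s_0|_C$ to produce a set $A\in\mathcal{C}ro(P)$; you take $G=(s_0|_C)^{-1}(V)$ whereas the paper takes $G=(U\times V)\cap C$ for suitable open $U\subseteq X$, $V\subseteq Y$, but this is immaterial.

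For the discontinuity part the strategy is again the same---exploit that a basic cross-open neighbourhood constrains $s$ only on finitely many crosses, then use the no-isolated-points hypothesis to find an off-cross point and perturb there---but the tactics differ. The paper uses complete regularity of the product $P=X\times Y$ to build a single bump $\varphi:P\to[0,1]$ separating the off-cross point $p_1$ from the closed set $\bigcup_k F_k$, and perturbs by $\varphi\cdot\bigl(z_1-s_0(p_1)\bigr)$ so as to hit a pre-chosen value $z_1\ne s_0(x_0,y_0)$ exactly; its escape neighbourhood is simply $W=Z\setminus\{z_1\}$. Your product bump $\varphi(x)\psi(y)z_0$ instead vanishes on the entire crosses $C_i$ (not merely on the $A_i\subseteq C_i$), and your choice of a symmetric $W_0$ with $z_0\notin W_0-W_0$ replaces ``hit a specific target'' by a difference argument. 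Both executions work; yours appeals only to complete regularity of $X$ and $Y$ separately, while the paper's cruder escape neighbourhood $Z\setminus\{z_1\}$ sidesteps the $W_0-W_0$ manoeuvre.
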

\begin{proof} Obviously, the separate continuity of $e_1$ (or $e_2$) implies that $e$ is separately continuous as well. So, we prove the separate continuity of $e_1$ only. Let $x_0\in X$, $y_0\in Y$, $s_0\in S$, $\tilde x_0=(x_0,s_0)$ and $z_0=e_1(\tilde x_0,y_0)=s_0(x_0,y_0)$. Consider a neighborhood $W_0$ of $z_0$. The regularity of $Z$ implies that there is a neighborhood $W$ of $z_0$ with $\overline{W}\subseteq W_0$. Set $p_0=(x_0,y_0)$ and $C=\mathrm{cr}\{p_0\}$. Since $s_0$ is separately continuous, there are open sets $U$ in $X$ and $V$ in $Y$  such that $x_0\in U$, $y_0\in V$ and  $s_0(G)\subseteq W$ where $G=(U\times V)\cap C$. Set $A=\overline{G}\cap C$. Then $A\in\mathcal{C}ro(P)$ and $s_0(A)\subseteq \overline{W}\subseteq W_0$. Put $O=O_{A,W_0}$. So, $O$ is an open neighborhood of $s_0$ in $S$. Set $\widetilde{U}=U\times O$. Consider $\tilde x=(x,s)\in \widetilde{U}$ and $y\in V$. Therefore, $s\in O$ and then  $s(A)\subseteq W_0$. Hence, $e_1(\tilde x, y_0)=s(x,y_0)\in s(A)\subseteq W_0$ and $e_1(\tilde x_0,y)=s_0(x_0,y)\in s_0(A)\subseteq W_0$. Thus, $e_1$ is separately continuous at $(\tilde x_0,y_0)$.
	
	Let us pass to the proof of the second part of the theorem. Consider a point $p_0=(x_0,y_0)\in P$ and $s_0\in S$. We are going to prove that $e$ is discontinuous at $q_0=(x_0,y_0,s_0)$. Let $z_0=e(q_0)=s_0(x_0,y_0)$, $z_1\in Z\setminus\{z_0\}$ and $W=Z\setminus\{z_1\}$. Then $W$ is a neighborhood of $z_0$. Consider a neighborhood $H$ of $q_0$. Therefore, there are open neighborhoods $U$ of $x_0$, $V$ of $y_0$ and a basic neighborhood $O\in\mathcal{B}_{cro}$ of $s_0$ such that $U\times V\times O\subseteq H$. Let $c_k=(a_k,b_k)\in P$, $C_k=\mathrm{cr}\{c_k\}$, $G_k$ be an open set in $C_k$, $F_k=\overline{G}_k\cap C_k$ and $W_k$ be an open set in $Z$ such that $O=\bigcap\limits_{k=1}^nO_{F_k,W_k}$. 
	
	Set $A=\{a_1,a_2,\dots,a_n\}$ and $B=\{b_1,b_2,\dots,b_n\}$. Since $X$ and $Y$ do not have isolated points, $A$ and $B$ are nowhere dense. Hence, there are points $x_1\in U\setminus A$ and $y_1\in V\setminus B$. Obviously, $F=\bigcup\limits_{k=1}^nF_k$ is a closed set and $p_1=(x_1,y_1)\notin F$. So, the complete regularity of $P$ yields that there is a continuous function $\varphi:P\to[0;1]$ such that $\varphi(p_1)=1$ and $\varphi(p)=0$ if $p\in F$. Denote $\bar z=z_1-s_0(p_1)$ and $s_1=s_0+\varphi\bar z$. Then $e(x_1,y_1,s_1)=s_1(p_1)=z_1\notin W$. But $s_1(p)=s_0(p)$ for $p\in F$. Therefore, $s_1(F_k)=s_0(F_k)\subseteq W_k$ for $k=1,2,\dots, n$. Hence, $s_1\in O$. Thus, $q_1\in(x_1,y_1,s_1)\in H$ and $e(q_1)\notin W$. So, $e(H)\not\subseteq W$ for any neighborhood $H$ of $q_0$. Consequently, $e$ is discontinuous at $q_0$. Then we conclude easily that $e_1$ and $e_2$ are discontinuous at $q_0$ as well. 
\end{proof}

Now we obtain some analogue of results from \cite{VMaM3}.

\begin{theorem}
	Let $X$ and $Y$ be completely regular spaces without isolated points  and $Z$ be a nontrivial metrizable topological vector space such that $X$ is $\alpha$-favorable and $Y$ is first countable at some point $b$. Then $S=S_{cro}(X\times Y, Z)$ is a meager space.
\end{theorem}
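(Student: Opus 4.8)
The plan is to exploit the everywhere discontinuity of the evaluation function $e_1\colon(X\times S)\times Y\to Z$, established in the preceding proposition, against the Baire--category consequences of separate continuity. The hypotheses are arranged precisely for this conflict: the preceding proposition makes $e_1$ separately continuous and everywhere discontinuous, while the assumptions that $X$ is $\alpha$-favorable and $Y$ is first countable at $b$ are exactly those under which a separately continuous map is forced to have an abundance of points of joint continuity along the fibre over $b$. The theorem should follow by turning this tension into a decomposition of $S$ into countably many nowhere dense sets.

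First I would fix a metric $d$ generating the topology of $Z$, a decreasing neighbourhood base $(V_n)$ of $b$ in $Y$, and for $(x,s)\in X\times S$ let $\omega(x,s)$ denote the oscillation of $e_1$ at the point $((x,s),b)$, measured with $d$ in the product topology of $(X\times S)\times Y$. For each $k$ the stratum $N_k=\{(x,s):\omega(x,s)\ge 1/k\}$ is closed in $X\times S$ (oscillation is upper semicontinuous), and since $e_1$ is discontinuous at every point we have $\omega(x,s)>0$ everywhere, whence $X\times S=\bigcup_k N_k$. So it suffices to show that each $N_k$ is nowhere dense. Given a nonempty basic open set $U\times\mathcal O$ with $U\subseteq X$ open and $\mathcal O\in\mathcal B_{cro}$, I would run the winning strategy of player $\alpha$ on $X$ together with the countable base $(V_n)$ at $b$ to produce, inside $U\times\mathcal O$, a nonempty open set on which the oscillation of $e_1$ along the $b$-fibre is below $1/k$. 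This is the Namioka-type estimate and is where the $\alpha$-favorability of $X$ and the first countability of $Y$ at $b$ are spent; it shows $N_k$ misses a nonempty open subset of every basic open set, so $N_k$ is nowhere dense and $X\times S$ is meager.

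It then remains to transfer meagerness from the product $X\times S$ to the factor $S$. Here I would establish a Kuratowski--Ulam--type section property adapted to an $\alpha$-favorable first factor: writing $X\times S=\bigcup_k N_k$ with each $N_k$ nowhere dense, I would use the winning $\alpha$-strategy to select a point $x\in X$ at which all the sections $(N_k)_x\subseteq S$ remain nowhere dense simultaneously, so that the decomposition descends and exhibits $S=S_{cro}(X\times Y,Z)$ as a countable union of nowhere dense sets.

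I expect the main obstacle to be twofold and concentrated in the last two steps. The first hard point is the local Namioka estimate: because a cross-open neighbourhood controls a function $s$ only on crosses $\overline G\cap\mathrm{cr}\{c\}$, one must show that along such neighbourhoods the oscillation of the evaluation can nonetheless be forced small, which requires interleaving the Banach--Mazur strategy on $X$ with the neighbourhood base at $b$. The second hard point is the category transfer from $X\times S$ to $S$: the classical Kuratowski--Ulam theorem is unavailable for an arbitrary, possibly non-separable, $\alpha$-favorable factor, so the fibering must be carried out directly by the strategy, arranging the $\alpha$-moves so that a single $x$ witnesses nowhere density of every section at once. If this transfer proves too delicate, the same estimates at least show $X\times S$ is not Baire, whence $S$ is not Baire by the product theorem for $\alpha$-favorable spaces; but upgrading that weaker statement to genuine meagerness is precisely the crux of the theorem.
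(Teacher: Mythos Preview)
Your diagnosis of the mechanism is right---the everywhere discontinuity of $e_1$ is to be played against a Namioka/Calbrix--Troallic phenomenon---but the direct decomposition you propose has a real gap at both of the points you yourself flag. In step 3 you want to show each $N_k$ is nowhere dense in $X\times S$ by running the $\alpha$-strategy on $X$ together with the base $(V_n)$ at $b$. That interleaving shrinks $U$ and $V_n$, but it never refines the $S$-coordinate: you end up with a set of the form $U'\times\mathcal O$, and smallness of the oscillation of $e_1$ on $U'\times\mathcal O$ at level $b$ requires all $s\in\mathcal O$ to be uniformly close on $U'\times V_n$, which nothing in your scheme enforces. The Calbrix--Troallic estimate needs the \emph{first} factor of the separately continuous map to be Baire, and here that factor is $X\times S$ (or an open piece of it), not $X$; assuming that is precisely the circularity you must avoid. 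Step 5 is likewise not salvaged by the sketch: the section-by-section Kuratowski--Ulam you describe, with a single $x$ witnessing nowhere density of all $(N_k)_x$, is not available for a general $\alpha$-favorable factor against an arbitrary second factor, and the $\alpha$-strategy gives you no hook into the topology of $S$.

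The paper dissolves both obstacles by arguing by contradiction and invoking the Banach category theorem first: if $S$ is not meager, there is a nonempty open Baire subspace $B\subseteq S$. Now $G=X\times B$ is Baire, since the product of an $\alpha$-favorable space with a Baire space is Baire, and the restriction $e_1|_{G\times Y}$ is still separately continuous and everywhere discontinuous. Calbrix--Troallic, applied with Baire first factor $G$ and $Y$ first countable at $b$, produces a point of joint continuity over $b$---a contradiction. Note how the Banach category step is exactly what your plan is missing: it manufactures the Baire-ness of (an open piece of) the $S$-factor, so that the Namioka machinery runs on $X\times B$ without ever needing a category transfer back to $S$.
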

\begin{proof}
	Suppose that $S$ is not meager. Then by the Banach category theorem  there is an open residual Baire nonempty subspace $B$ of $S$.
	Let  $\widetilde{X}=X\times S$ and  $e_1:\widetilde{X}\times Y\to Z$ be defined by
	$e_1(\tilde x,y)=s(x,y),$
	where $\tilde x=(x,s)\in \widetilde{X}$ and $y\in Y$. By Proposition~\ref{prop:sep_cont_of_evaluation_function} $e_1$ is a separately continuous everywhere discontinuous function. Since $G=X\times B$ is an open subset of $\widetilde X$, the restriction $f=e_1|_G$ is a separately continuous everywhere discontinuous function as well. But the product of a Baire space and an $\alpha$-favorable space is Baire  \cite{Wh}. Hence, $G$ is a Baire space.  But $Y$ is first countable at some point $b$. So, the Calbrix-Troallic theorem  \cite{CaTr} yields that $f$ is continuous at $(\tilde a, b)$ for some point $\tilde a\in G$, which is impossible.
\end{proof}

\section{Reduction to the case of Eberlein compacts}

Denote by $C_p(X,Y)$ the space of all continuous functions $u:X\to Y$ equipped with the topology of pointwise convergent and $C_p(X)=C_p(X,\mathbb{R})$.  Recall that a compact space is called \textit{an Eberlein compact} if it is homeomorphic to a subspace of a Banach space with the weak topology (or, equivalently, to a subspace of $C_p(X)$ for a compact $X$).

\begin{proposition}\label{prop:EberleinCp(X,Y)}
	Let $X$ be a compact space, $Y$ be a 
	metrizable space
	and $K$ be a compact subspace of $C_p(X, Y)$. Then $K$ is an Eberlein compact.
\end{proposition}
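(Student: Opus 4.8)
The plan is to reduce the statement to the classical Eberlein--Grothendieck characterization of Eberlein compacts. The key external fact I would invoke is that a compact space is Eberlein if and only if it admits a family of open $F_\sigma$ subsets that separates points and decomposes into countably many point-finite subfamilies (equivalently, $K$ embeds in some $c_0(\Gamma)$-like space, or admits a $T_0$-separating $\sigma$-point-finite family of cozero sets). So the real work is to manufacture, from the pointwise topology on $C_p(X,Y)$ with $X$ compact and $Y$ metrizable, such a $\sigma$-point-finite separating family on the compact subspace $K$.

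First I would fix a metric $d$ on $Y$ and observe that the topology on $K\subseteq C_p(X,Y)$ is generated by the evaluation maps $\pi_x\colon K\to Y$, $\pi_x(u)=u(x)$, for $x\in X$; each $\pi_x$ is continuous, and the family $\{\pi_x:x\in X\}$ separates points of $K$ since distinct continuous functions differ at some point. Next, using compactness of $X$, for each $n\in\mathbb{N}$ I would take a finite subset, or rather exploit that each $u\in K$ is continuous hence its image is compact in $Y$ and can be covered by finitely many $d$-balls of radius $1/n$; the aim is to exhibit, for each $n$, a point-finite (on $K$) family of open sets that ``resolves'' the functions at scale $1/n$. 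Concretely I would consider for a fixed countable base (or $\sigma$-discrete network) $\{V_m\}$ of $Y$ the sets $\{u\in K:u(x)\in V_m\}$ and argue via the compactness of $X$ that countably many of these, indexed appropriately, already separate points and split into point-finite pieces.

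The main obstacle I expect is verifying the $\sigma$-point-finiteness: a naive family indexed by all pairs $(x,V_m)$ with $x\in X$ is far too large and need not be point-finite. To overcome this I would replace $X$ by a countable dense behavior captured through the metric structure of $Y$: for each pair $(m,k)$ I would look at the functions $u\in K$ whose oscillation pattern with respect to the $k$-th member of a $\sigma$-discrete base of $Y$ forces membership in only finitely many sets. The cleanest route is probably to embed $C_p(X,Y)$ into a product of copies of $Y$, then use that $Y$ metrizable embeds in a countable product of intervals, thereby reducing to the real-valued case $C_p(X,\mathbb{R}^\omega)\cong C_p(X,\mathbb{R})^\omega$, where the statement ``compact subspaces of $C_p(X)$ with $X$ compact are Eberlein'' is exactly the classical theorem of Grothendieck--Eberlein--\v{S}mulian. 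Thus the heart of the proof is the reduction $Y\hookrightarrow\mathbb{R}^\omega$ together with the stability of the Eberlein property under closed subspaces and countable products, and I would present the metrizable-to-real reduction as the load-bearing step while citing the scalar Eberlein theorem as the black box.
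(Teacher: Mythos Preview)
Your proposed reduction has a genuine gap at the ``load-bearing step.'' The embedding $Y\hookrightarrow\mathbb{R}^\omega$ holds only for \emph{separable} metrizable spaces, and the hypothesis of the proposition does not assume separability of $Y$. Nor can you recover it: the portion of $Y$ actually used by $K$ need not be separable. For instance, take $\kappa>\aleph_0$, let $X=\kappa\cup\{\infty\}$ be the one-point compactification of a discrete set of size $\kappa$, let $Y=\ell^2(\kappa)$ with its norm metric, and for each $\alpha<\kappa$ let $u_\alpha\colon X\to Y$ send $\alpha$ to the unit vector $e_\alpha$ and every other point to $0$. Then each $u_\alpha$ is continuous, and $K=\{u_\alpha:\alpha<\kappa\}\cup\{0\}$ is compact in $C_p(X,Y)$ (it is homeomorphic to $X$ itself, hence Eberlein), yet the closure of $\bigcup_{u\in K}u(X)$ in $Y$ contains the non-separable set $\{e_\alpha:\alpha<\kappa\}$. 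So there is no way to push $K$ into $C_p(X,\mathbb{R})^\omega$ by restricting the range.

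The paper circumvents this entirely. Instead of embedding $Y$ into a countable product, it builds a compact space out of $Y$ using the metric: set
\[
\Lambda=\Big\{\lambda\in[0,1]^Y:\big|\lambda(y)-\lambda(y')\big|\le d(y,y')\text{ for all }y,y'\in Y\Big\},
\]
which is closed in $[0,1]^Y$ and hence compact, and put $Z=X\times\Lambda$. The map $\Phi\colon C_p(X,Y)\to C_p(Z)$ given by $\Phi(u)(x,\lambda)=\lambda(u(x))$ is then shown to be a continuous injection, so $K$ is homeomorphic to a compact subspace of $C_p(Z)$ with $Z$ compact, and the scalar Eberlein theorem finishes. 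The point is that $\Lambda$ is compact regardless of whether $Y$ is separable, because compactness comes from Tychonoff rather than from second countability. If you want to repair your approach along its original lines, you would need to replace the $\mathbb{R}^\omega$ embedding by something that works for arbitrary metrizable $Y$; the Lipschitz-function space $\Lambda$ is precisely such a replacement.
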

\begin{proof}
	
	Let $d$ be a metric which generates the topology of $Y$. Following \cite[Proof of Step 1 of Theorem 1]{Chr} we define the closed subspace
	$$
	\Lambda=\Big\{\lambda\in [0;1]^Y:\forall y,y'\in Y\ \Big|\ \big|\lambda(y)-\lambda(y')\big|\le d(y,y')\Big\}
	$$
	of the Tychonoff cube $[0;1]^Y$. Therefore, $\Lambda$ is compact and so $Z=X\times \Lambda$ is compact as well. Define a mapping $\Phi:C_p(X,Y)\to [0;1]^Z$ by setting
	$$
	\Phi(u)(z)=\lambda\big(u(x)\big),\ \text{where }u\in C_p(X,Y)\text{ and }z=(x,\lambda)\in Z
	$$
	
	Firstly, we prove that $\Phi:C_p(X,Y)\to C_p(Z)$. Fix $u\in C_p(X,Y)$ and $c=(a,\alpha)\in Z$. Let us prove that $w=\Phi(u)$ is continuous at $c$. Consider, $\varepsilon>0$. Set $b=u(a)$ and $U=u^{-1}\big(B(b,\frac{\varepsilon}{2})\big)$. Since $u$ is continuous, $U$ is a neighborhood of $a$. Put $O=\Big\{\lambda\in\Lambda:\big|\lambda(b)-\alpha(b)\big|<\frac{\varepsilon}{2}\Big\}$. Then $O$ is a neighborhood of $\alpha$ in $\Lambda$. Therefore, $W=U\times O$ is a neighborhood of $c$ in $Z$. Consider a point $z=(x,\lambda)\in W$. Then $y=u(x)\in B(b,\frac{\varepsilon}{2})$ and $\big|\lambda(b)-\alpha(b)\big|<\frac{\varepsilon}{2}$. Consequently, 
	\begin{align*}
		\big|w(z)-w(c)\big|&=\big|\lambda(y)-\alpha(b)\big|\\
		&\le \big|\lambda(y)-\lambda(b)\big|+\big|\lambda(b)-\alpha(b)\big|\\
		&<d(y,b)+\tfrac{\varepsilon}{2}
		<\tfrac{\varepsilon}{2}+\tfrac{\varepsilon}{2}=\varepsilon.
	\end{align*}  
	Thus $w$ is continuous at $c$.
	
	Secondly, we prove that $\Phi:C_p(X,Y)\to C_p(Z)$ is continuous. Fix ${c=(a,\alpha)\in Z}$. Define a function $\varphi:C_p(X,Y)\to [0;1]$ by setting ${\varphi(u)=\Phi(u)(c)=\alpha\big(u(a)\big)}$, $u\in C_p(X,Y)$. It is enough to prove that $\varphi$ is continuous. Let $(u_m)_{m\in M}$ be a net in $C_p(X,Y)$ and $v\in C_p(X,Y)$ such that $u_m\to v$ in $C_p(X,Y)$. Then $u_m(a)\to v(a)$. Consequently, the continuity of $\alpha$ implies that   $\varphi(u_m)=\alpha\big(u_m(a)\big)\to\alpha\big(v(a)\big)=\varphi(v)$. Thus, $\varphi$ is continuous.
	
	Finally, let us prove that $\Phi$ is injective. Consider distinct points $u,u'\in C_p(X,Y)$. Then there is a point $a\in X$ such that $b=u(a)$ and $b'=u'(a)$ are distinct points in $Y$. Define $\alpha\in \Lambda$ by $\alpha(y)=\min\big\{1,d(b,y)\big\}$, $y\in Y$. Since $d(b,b')>0$, $\alpha(b')>0=\alpha(b)$. Let $c=(a,\alpha)$. Therefore, 
	$$
	\Phi(u)(c)=\alpha\big(u(a)\big)=\alpha(b)< \alpha(b')=\alpha\big(u'(a)\big)=\Phi(u')(c).
	$$
	Consequently, $\Phi(u)\ne\Phi(u')$.
	
	Thus, we have proved that $\Phi:C_p(X,Y)\to C_p(Z)$ is a continuous injection.  Therefore, $K$ is homeomorphic to the compact subspace $\widetilde{K}=\Phi(K)$ of $C_p(Z)$ and, hence, $K$ is an Eberlein compact.
\end{proof}

\begin{lemma}\label{lem:GammaAlpha}
	Let $X$, $Y$ and $Z$ be Hausdorff compact spaces, $\alpha:X\to Z$ be a continuous function and $\gamma:X\to Y$ be a continuous surjection such that
	\begin{equation}\label{equ:GammaAlpha}
		\forall x,x'\in X\ \big|\ \gamma(x)=\gamma(x')\Rightarrow \alpha(x)=\alpha(x').
	\end{equation}
	Then there is a unique continuous function $\beta:Y\to Z$ such that $\alpha=\beta\circ \gamma$.
\end{lemma}
\begin{proof} \textit{Existence.}
	Let $\xi:Y\to X$ be a right inversion of $\gamma$, that is $\gamma\big(\xi(y)\big)=y$ for any $y\in Y$. Define $\beta=\alpha\circ\xi$. By (\ref{equ:GammaAlpha}) we conclude that $\alpha=\beta\circ \gamma$. Indeed, if $x\in X$, $y=\gamma(x)$ and $x'=\xi(y)$ then $\gamma(x)=\gamma(x')=y$ and, hence, $\alpha(x)=\alpha(x')=\alpha\big(\xi(y)\big)=\beta(y)=\beta\big(\gamma(x)\big)$.
	
	\textit{Uniqueness.} Let $\beta,\beta_1:Y\to Z$ be functions with $\alpha=\beta\circ \gamma=\beta_1\circ \gamma$. Fix $y\in Y$. Since $\gamma$ is a surjection, there exists $x\in X$ with $y=\gamma(x)$. Therefore, $\beta(y)=\beta\big(\gamma(x)\big)=\alpha(x)=\beta_1\big(\gamma(x)\big)=\beta_1(y)$. Thus, $\beta=\beta_1$
	
	\textit{Continuity.} Consider a closed subset $C$ of $Z$. Then $A=\alpha^{-1}(C)$ is closed in $X$ and, hence, $A$ is compact. Therefore, $B=\gamma(A)$ is compact and, hence, $B$ is closed. Since $\gamma$ is surjective and 
	$A=\alpha^{-1}(C)=(\beta\circ \gamma)^{-1}(C)=\gamma^{-1}\big(\beta^{-1}(C)\big),$
	we conclude that $B=\gamma(A)=\gamma\Big(\gamma^{-1}\big(\beta^{-1}(C)\big)\Big)=\beta^{-1}(C)$. So, $\beta^{-1}(C)$ is closed. Thus, $\beta$ is continuous.
\end{proof}

\begin{lemma}\label{lem:ReductionToEberlein}
	Let $X$, $Y$, $K$ be compact spaces and $Z$ be a metrizable space such that $K\subseteq S(X\times Y, Z)$. Then there exists an Eberlein compact $\widetilde{X}$ such that $K$  embeds homeomorphically into $S(\widetilde{X}\times Y,Z)$ and $\widetilde{X}$ is a continuous image of $X$.
\end{lemma}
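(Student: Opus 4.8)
The plan is to collapse the first factor $X$ by the equivalence relation that the functions in $K$ cannot detect, to realize the resulting quotient concretely as a compact subspace of a space of the form $C_p(\,\cdot\,,Z)$, and then to invoke Proposition~\ref{prop:EberleinCp(X,Y)} to conclude that this quotient is Eberlein. Since $X$, $Y$ and (afterwards) $\widetilde X$ are compact, Proposition~\ref{prop:cru=cro} lets me work throughout in the cross-uniform topology, computing with the pseudometrics $d_c$ from \eqref{equ:metric_dc}. The first step is to form $L=\{s_y:s\in K,\ y\in Y\}\subseteq C_p(X,Z)$, where $s_y=s(\cdot,y)$ is continuous by the separate continuity of $s$. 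The key point is that $\Theta\colon K\times Y\to C_p(X,Z)$, $\Theta(s,y)=s_y$, is continuous: for fixed $x$ the coordinate $(s,y)\mapsto s_y(x)=s(x,y)$ equals the value $e_2(x,(y,s))$, which is continuous in the variable $(y,s)\in Y\times K\subseteq\widetilde Y$ by Proposition~\ref{prop:sep_cont_of_evaluation_function}. Hence $L=\Theta(K\times Y)$ is a \emph{compact} subspace of $C_p(X,Z)$.

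Next I would define $\gamma\colon X\to C_p(L,Z)$ by $\gamma(x)(\ell)=\ell(x)$. Each $\gamma(x)$ is continuous on $L$, being evaluation at $x$ in the pointwise topology, and $\gamma$ is continuous into $C_p(L,Z)$ because for each fixed $\ell\in L$ the map $x\mapsto\ell(x)$ is continuous. Put $\widetilde X=\gamma(X)$, a compact subspace of $C_p(L,Z)$; it is a continuous image of $X$. Since $L$ is compact and $Z$ is metrizable, Proposition~\ref{prop:EberleinCp(X,Y)} applied to $\widetilde X\subseteq C_p(L,Z)$ shows that $\widetilde X$ is an Eberlein compact. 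By construction $\gamma(x)=\gamma(x')$ holds exactly when $s(x,y)=s(x',y)$ for all $s\in K$ and $y\in Y$, so $\gamma$ is precisely the quotient that kills the relation undetectable by $K$.

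Finally I would transport $K$ across $\gamma$. For $s\in K$ set $\tilde s(\gamma(x),y)=s(x,y)$; this is well defined by the displayed property of $\gamma$, and it is separately continuous, since $\tilde s(\tilde x,y)=\tilde x(s_y)$ is continuous in $\tilde x\in\widetilde X\subseteq C_p(L,Z)$ (evaluation at the point $s_y\in L$) while $y\mapsto s(x,y)$ is continuous. Thus $\tilde s\in S(\widetilde X\times Y,Z)$. The clean way to finish is to introduce the pullback $\gamma^{*}\colon S(\widetilde X\times Y,Z)\to S(X\times Y,Z)$, $\gamma^{*}(\tilde s)=\tilde s\circ(\gamma\times\mathrm{id}_Y)$, which lands in $S(X\times Y,Z)$ and is injective because $\gamma$ is surjective. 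Since $\gamma$ is a surjection, $\gamma\times\mathrm{id}_Y$ carries $\mathrm{cr}\{(a,b)\}$ onto $\mathrm{cr}\{(\gamma(a),b)\}$, and a direct computation yields $d_c(\gamma^{*}\tilde s,\gamma^{*}\tilde t)=d_{\tilde c}(\tilde s,\tilde t)$ with $c=(a,b)$ and $\tilde c=(\gamma(a),b)$. As $c$ runs over $X\times Y$ the point $\tilde c$ runs over all of $\widetilde X\times Y$, so $\gamma^{*}$ pulls the generating family of pseudometrics back exactly and is therefore a homeomorphic embedding. Because $\gamma^{*}(\tilde s)=s$ for every $s\in K$, we have $K\subseteq\gamma^{*}\big(S(\widetilde X\times Y,Z)\big)$, and hence $s\mapsto\tilde s=(\gamma^{*})^{-1}(s)$ is the desired homeomorphic embedding of $K$ into $S(\widetilde X\times Y,Z)$.

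I expect the main obstacle to be the compactness of $L$, that is, the joint continuity of $(s,y)\mapsto s_y$ into $C_p(X,Z)$: this is exactly where Proposition~\ref{prop:sep_cont_of_evaluation_function} (continuity of $e_2$ in its second variable $(y,s)$) is indispensable, since mere separate continuity of the individual functions in $K$ would not give it. The secondary technical point is the pseudometric identity $d_c(\gamma^{*}\tilde s,\gamma^{*}\tilde t)=d_{\tilde c}(\tilde s,\tilde t)$, which rests on crosses being carried onto crosses by $\gamma\times\mathrm{id}_Y$ together with the surjectivity of $\gamma$.
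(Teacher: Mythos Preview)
Your proof is correct and follows the same architecture as the paper's: quotient $X$ by what $K$ cannot detect, realize the quotient inside $C_p(\text{compact},Z)$, invoke Proposition~\ref{prop:EberleinCp(X,Y)}, and then verify that the pullback along $\gamma\times\mathrm{id}_Y$ is a pseudometric isometry for the families $(d_c)$ and $(d_{\tilde c})$ so that $K$ transfers to $S(\widetilde X\times Y,Z)$. The one noteworthy variation is in the choice of auxiliary compact: the paper maps $X$ directly into $C_p(Y\times K,Z)$ via $\varphi(x)(y,s)=s(x,y)$, so compactness of the index set is immediate, whereas you first form $L=\Theta(K\times Y)\subseteq C_p(X,Z)$ and only then map into $C_p(L,Z)$; this costs you the extra step of checking that $L$ is compact (via the continuity of $(e_2)^x$ from Proposition~\ref{prop:sep_cont_of_evaluation_function}), but in return you obtain the clean formula $\tilde s(\tilde x,y)=\tilde x(s_y)$ and avoid the paper's appeal to the factorization Lemma~\ref{lem:GammaAlpha} for the continuity of $\tilde s$ in its first variable.
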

\begin{proof} Let $P=X\times Y$, $\widetilde{Y}=Y\times K$ and $f:X\times \widetilde{Y}\to Z$ be given by $f(x,\tilde y)=s(x,y)$, where $x\in X$ and $\tilde y=(y,s)\in \widetilde{Y}$. Therefore, $f$ is the restriction of the function $e_2$ from Proposition~\ref{prop:sep_cont_of_evaluation_function} to the product $X\times \widetilde{Y}$. So, $f$ is separately continuous. Define a mapping $\varphi:X\to C_p(\widetilde{Y},Z)$ by setting $\varphi(x)(\tilde y)=f(x,\tilde y)$, where $(x,\tilde{y})\in \widetilde{P}$. Since $f$ is separately continuous, it is easy to see that $\varphi$ is continuous. Set $\widetilde{X}=\varphi(X)$. Then $\widetilde{X}$ is a compact subspace of $C_p(\widetilde{Y},Z)$. Thus, it is an Eberlein compact by  Proposition~\ref{prop:EberleinCp(X,Y)}.
	
	Denote $\widetilde{P}=\widetilde{X}\times Y$ and $\widetilde{S}=S(\widetilde{P},Z)$. We are going to  prove that $K$ is homeomorphic to some subspace $\widetilde{K}$ of $\widetilde{S}$. Define $\psi:P\to \widetilde{P}$ by ${\psi(p)=\big(\varphi(x),y\big)}$, $p=(x,y)\in P$. Obviously $\psi$ is a continuous surjection. Let $\Psi:\widetilde{S}\to S$ be given by $\Psi(\tilde s)=\tilde s\circ \psi$, $\tilde s\in\widetilde{S}$. Since $\widetilde{P}=\psi(P)$, $\Psi$ is injective. Set $S_0=\Psi(\widetilde{S})$ and $\Phi=\Psi^{-1}:S_0\to \widetilde S$.
	
	Consider some metric $d$ which generates the topology of $Z$. Let $(d_{c})_{c\in P}$ and $(d_{\tilde c})_{\tilde c\in\widetilde{P}}$ be the families of pseudometrics on $S$ and $\widetilde{S}$ which are defined by (\ref{equ:metric_dc}). By Proposition~\ref{prop:cru=cro} the cross-uniform topology on $S$ (resp. $\widetilde{S}$) coincides with the cross-open topology and is generated by the family $(d_{c})_{c\in P}$ (resp. $(d_{\tilde c})_{\tilde c\in\widetilde{P}}$). Let $\tilde c=(\tilde a, b)\in \widetilde{P}$ and choose $a\in X$ with $\varphi(a)=\widetilde{a}$. Set $c=(a,b)$. So, $\psi(c)=(\varphi(a),b)=\tilde c$. Put $C=\mathrm{cr}\{c\}$ and $\widetilde{C}=\mathrm{cr}\{\tilde c\}$. Observe that
	\begin{align*}
		\widetilde{C}&=\Big\{(\tilde x,y)\in \widetilde{P}:\tilde x=\tilde a\text{ or }y=b\Big\}\\
		&=\Big\{\big(\varphi(x),y\big)\in \widetilde{P}:\varphi(x)=\varphi(a)\text{ or }y=b\Big\}\\
		&=\Big\{\big(\varphi(x),y\big)\in \widetilde{P}:x=a\text{ or }y=b\Big\}
		=\psi(C).
	\end{align*}
	Fix $s,t\in S_0$ and consider $\tilde s,\tilde t\in\widetilde{S}$ such that $s=\Psi(\tilde s)$ and $t=\Psi(\tilde t)$. Therefore, $s=\tilde s\circ \psi$ and $t=\tilde t\circ \psi$. Hence, 
	\begin{align*}
		d_c(s,t)&=\sup\limits_{p\in C}d\big(s(p),t(p)\big)
		=\sup\limits_{p\in C}d\Big(\tilde s\big(\psi(p)\big),\tilde t\big(\psi(p)\big)\Big)\\
		&=\sup\limits_{\tilde p\in \widetilde{C}}d\big(\tilde s(\tilde p),\tilde t(\tilde p)\big)
		=d_{\tilde c}(\tilde s,\tilde t).
	\end{align*}
	
	This proves that $\Phi$ and $\Psi=\Phi^{-1}$ are continuous with respect to the cross-uniform topology. Thus, $\Phi:S_0\to \widetilde{S}$ is a homeomorphism.
	
	It remains to prove that $K\subseteq S_0$. Consider $s\in K$. Fix $y\in Y$ and put $\tilde y=(y,s)$. Let $\alpha=s_y:X\to Z$, that is $\alpha(x)=s(x,y)$ for $x\in X$. Put $\gamma=\varphi$. Then $\gamma$ is a continuous surjection of $X$ onto $\widetilde{X}$. Let $x,x'\in X$ be points such that $\gamma(x)=\gamma(x')$. Therefore, 
	$$
	\alpha(x)=s(x,y)=\varphi(x)(\tilde y)=\gamma(x)(\tilde y)=\gamma(x')(\tilde y)=\varphi(x')(\tilde y)=s(x',y)=\alpha(x').
	$$
	So, (\ref{equ:GammaAlpha}) holds. Therefore, using Lemma~\ref{lem:GammaAlpha} with $Y=\widetilde{X}$ we conclude that there exists a continuous function $\beta:\widetilde{X}\to X$ with $\beta\circ\gamma=\alpha$. Set $\tilde s(\tilde x,y)=\beta(\tilde x)$ for any $\tilde x\in\widetilde{X}$. Consequently,  for $x\in X$ and $\tilde x=\gamma(x)=\varphi(x)$ we have
	$$\tilde s(\tilde x,y)=\beta(\tilde x)=\beta\big(\gamma(x)\big)=\alpha(x)=s(x,y).$$ 
	Thus, we have defined a function $\tilde s:\widetilde{P}\to Z$ such that $\tilde s(\tilde x,y)=s(x,y)$ for any $x\in X$, $y\in Y$ and $\tilde x=\varphi(x)$. So, 
	$\tilde s_y=s_y$ and $\tilde s^{\tilde x}=s^x$ are continuous. Thus, $\tilde s$ is a separately continuous function and so $\tilde s\in\widetilde{S}$. Observe that $\Psi(\tilde s)(x,y)=\tilde s(\varphi(x),y)=s(x,y)$ for any $(x,y)\in P$. Therefore, $s=\Psi(\tilde s)\in S_0$. Thus, we have proved the inclusion $K\subseteq S_0$. Hence, $K$ is homeomorphic to a subspace $\widetilde{K}=\Phi(X)$ of $\widetilde{S}$. 
\end{proof}

\section{The cellularity and the sharp cellularity of Eberlein compacts}

Let $w(X)$ denote the \textit{weight} of a topological space $X$, $d(X)$ denote the \textit{density} of $X$,
and  $c(X)$ denote the \textit{cellularity} of $X$, that is
\begin{align*}
	w(X)&=\min\{\frak |\mathcal{B}|: \mathcal{B} \text{ is a base of }X\},\\ 
	d(X)&=\min\{|A|:\overline{A}=X\}\ \ \ \ \ \text{and}\\
	c(X)&=\sup\Big\{|\mathcal U|:\mathcal U\mbox{ is
		a disjoint family of open sets in }X\Big\},
\end{align*}
where $|A|$ means the cardinality of a set $A$. The \textit{sharp cellularity}  is, by definition, the cardinal number 
$$c^\sharp(X)=\sup\Big\{|\mathcal U|^+:\mathcal U\mbox{ is
	a disjoint family of open sets in }X\Big\},$$
where $\frak m^+$ means the least cardinal number which is grater than  $\frak m$.
Remark that in \cite{Ju} the sharp cellularity is denoted by $\hat c(X)$. In \cite[4.1]{Ju} it was proved that $c^\sharp(X)=c(X)^+$ if $c(X)$ is either singular or successor cardinal number. The next lemma can be obtained from the previous fact. But the direct proof is also not so hard.

\begin{lemma}\label{lem:DisjointSystem}
	Let $X$ be a topological space, $\frak m>\aleph_0$, and $(\mathcal G_n)_{n=1}^\infty$ be a sequence of disjoint open systems $\mathcal G_n\not\ni\emptyset$ such that $\sup\limits_{n\in\mathbb N}|\mathcal G_n|=\frak m$. Then there exists a disjoint open system $\mathcal G\not\ni\emptyset$ such that $|\mathcal G|=\frak m$.
\end{lemma}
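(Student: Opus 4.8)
The plan is to separate according to whether the supremum $\sup_n|\mathcal G_n|=\frak m$ is attained. If $|\mathcal G_{n_0}|=\frak m$ for some $n_0$, I simply take $\mathcal G=\mathcal G_{n_0}$. Otherwise $|\mathcal G_n|<\frak m$ for every $n$, and after passing to a subsequence I may assume the cardinals $\frak m_n:=|\mathcal G_n|$ strictly increase to $\frak m$; in particular $\mathrm{cf}(\frak m)=\aleph_0$ and $\frak m=\sum_n\frak m_n$. From here I argue by contradiction, supposing that no disjoint open system has cardinality $\frak m$ (a system of larger cardinality would contain a subsystem of cardinality exactly $\frak m$, so the hypothesis says that every disjoint open system has cardinality $<\frak m$). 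Writing $c(U)$ for the cellularity of the subspace $U$, the engine of the argument is a single maximal antichain together with the dichotomy given by the open set $W=\bigcup\{U:U\text{ nonempty open and }c(U)<\frak m\}$: I treat separately the cases where $W$ is or is not dense.

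If $W$ is \emph{not} dense, then $V=X\setminus\overline W$ is nonempty open and every nonempty open $U\subseteq V$ has $c(U)=\frak m$ (it is $\ge\frak m$ because $U$ is disjoint from $W$ and so is not one of the small sets defining $W$, and $\le\frak m$ by the standing hypothesis). On such a hereditarily ``full'' set I build the desired system by a reservoir fusion along $\omega$: set $R_0=V$ and, having a nonempty open $R_{n-1}\subseteq V$ with $c(R_{n-1})=\frak m>\frak m_n$, choose inside $R_{n-1}$ a disjoint family of $\frak m_n+1$ nonempty open sets (which exists since $c(R_{n-1})=\frak m$), declare $\frak m_n$ of them to be $\mathcal A_n$ and the remaining one to be the reservoir $R_n\subseteq R_{n-1}$, again contained in $V$. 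For $m<n$ the set $\mathcal A_m$ is disjoint from $R_m\supseteq R_{n-1}\supseteq\bigcup\mathcal A_n$, so $\bigcup_n\mathcal A_n$ is disjoint of cardinality $\sum_n\frak m_n=\frak m$, contradicting the hypothesis.

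If $W$ is dense, I use Zorn's lemma to take a maximal disjoint family $\mathcal W$ of nonempty open sets each of cellularity $<\frak m$; density of $W$ forces $\bigcup\mathcal W$ to be dense, so $\mathcal W$ is a maximal antichain and $|\mathcal W|=\mu<\frak m$. To each $G\in\mathcal G_n$ I assign one member $M(G)\in\mathcal W$ that it meets (possible since $\bigcup\mathcal W$ is dense) and put $g_n^M=|\{G\in\mathcal G_n:M(G)=M\}|$; the traces $\{G\cap M:M(G)=M\}$ are disjoint open subsets of $M$, whence $\sup_n g_n^M\le c(M)<\frak m$. Since $\sum_{M\in\mathcal W}g_n^M=\frak m_n$ for each $n$, the inequality $\sum_{M}\sup_n g_n^M\ge\sup_n\frak m_n=\frak m$ holds; as $\mu<\frak m$ and every summand is $<\frak m$, the numbers $\sup_n g_n^M$ are cofinal in $\frak m$ along infinitely many distinct $M$. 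Picking distinct $M_k$ and indices $n_k$ with $g_{n_k}^{M_k}\nearrow\frak m$ and taking the union of the corresponding traces, which lie in the pairwise disjoint sets $M_k$, I obtain a disjoint open system of cardinality $\sum_k g_{n_k}^{M_k}=\frak m$, again a contradiction.

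The step I expect to be delicate is the first case, namely isolating the hereditarily full region $V$. A naive direct merging of the $\mathcal G_n$ fails because a single member of one family may meet unboundedly many members of another, so a greedy commitment can be completely ``blocked''; passing to a region in which \emph{every} nonempty open set still has full cellularity is exactly what lets the $\omega$-step fusion proceed with no blocking. The rest is routine cardinal arithmetic of the interchange $\sum\sup\ge\sup\sum$ and of countable sums of an increasing sequence of cardinals, which is where the hypotheses $\frak m>\aleph_0$ and $\mathrm{cf}(\frak m)=\aleph_0$ are used.
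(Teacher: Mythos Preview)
Your proof is correct, but it takes a genuinely different route from the paper's.

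The paper proceeds constructively, without contradiction and without your cellularity dichotomy. After arranging (via subsequences and extensions to maximal families) that each $\frak m_n=|\mathcal G_n|$ is regular, that $\frak m_n<\frak m_{n+1}$, and that $\bigcup\mathcal G_n$ is dense, it builds an increasing chain of disjoint open families $\mathcal U_n$ with $|\mathcal U_n|=\frak m_n$ by a single ``split one cell'' recursion: start with $\mathcal U_1=\mathcal G_1$; given $\mathcal U_n$, use the regularity of $\frak m_{n+1}$ and the density of $\bigcup\mathcal U_n$ to find a single cell $W_n\in\mathcal U_n$ meeting $\frak m_{n+1}$ members of $\mathcal G_{n+1}$, and replace $W_n$ by those $\frak m_{n+1}$ intersections to obtain $\mathcal U_{n+1}$. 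The union $\bigcup_n(\mathcal U_n\setminus\{W_k:k\in\mathbb N\})$ is then disjoint of cardinality $\frak m$.

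By contrast, you argue by contradiction and bifurcate on whether the union $W$ of small-cellularity open sets is dense. Your Subcase~A (the reservoir fusion inside $V=X\setminus\overline W$) is close in spirit to the paper's recursion, but you rely on the hereditary full-cellularity of $V$ rather than on pigeonhole against the given $\mathcal G_{n+1}$; your Subcase~B introduces an auxiliary maximal antichain $\mathcal W$ and the $\sum\sup\ge\sup\sum$ interchange, neither of which appears in the paper. The paper's argument is shorter and uses only the given families $\mathcal G_n$; your structural approach is heavier but has the virtue of isolating a reusable dichotomy (either some open set has hereditarily full cellularity, or the space is covered densely by small-cellularity pieces). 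One small point: in Subcase~B your passage from ``the $h^M=\sup_n g_n^M$ are cofinal in $\frak m$'' to ``distinct $M_k$ and $n_k$ with $g_{n_k}^{M_k}\nearrow\frak m$'' deserves one more line---pick a cofinal sequence $\lambda_k\nearrow\frak m$, inductively choose $M_k$ with $h^{M_k}>\max(\lambda_k,\,h^{M_1},\dots,h^{M_{k-1}})$ (possible since each $h^{M_i}<\frak m$ and the $h^M$ are cofinal), and then $n_k$ with $g_{n_k}^{M_k}>\lambda_k$.
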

\begin{proof}
	If the cofinality $\mathrm{cf}(\frak m)>\aleph_0$ then there is $n\in\mathbb N$ with $|\mathcal{G}_n|=\frak m$ and we put $\mathcal{G}=\mathcal{G}_n$.
	Suppose that $\mathrm{cf}(\frak m)=\aleph_0$. Without lose of generality we may assume that $|\mathcal{G}_n|=\frak m_n$ is regular, $\aleph_0<\frak m_n<\frak m_{n+1}$  and $\bigcup \mathcal G_n$ is dense in $X$ for any $n$.
	
	
	We are going to construct sequences of disjoint open families $\mathcal U_n\not\ni\emptyset$ and  sets $W_n\in\mathcal U_n$ such that $\bigcup\mathcal U_n$ is dense in $X$, $\mathcal U_n\setminus \{W_n\}\subseteq \mathcal U_{n+1}$ and $|\mathcal U_n|=\frak m_n$. Let $\mathcal U_1=\mathcal G_1$. Suppose that we have already constructed $\mathcal U_n$ for some $n\in\mathbb N$. Set $\mathcal G_{n+1}(U)=\{G\in\mathcal G_{n+1}:G\cap U\ne\emptyset\}$. Then $\mathcal G_{n+1}=\bigcup_{U\in\mathcal U_n}\mathcal G_{n+1}(U)$. Since $|\mathcal U_n|=\frak m_n<\frak m_{n+1}$, $\frak m_{n+1}=|\mathcal G_{n+1}|\le\sum_{U\in\mathcal U_n}|\mathcal G_{n+1}(U)|\le \frak m_{n+1}$. So, by the regularity of  $\frak m_{n+1}$ there exists $W_n\in \mathcal U_n$ such that $|\mathcal G_{n+1}(W_n)|=\frak m_{n+1}$. Set $\mathcal U_{n+1}=(\mathcal U_n\setminus\{W_n\})\cup\{G\cap W_n:G\in \mathcal G_{n+1}(W_n)\}$. Thus, $\mathcal U_{n+1}$ and $W_n$ have the needed properties. 
	
	Let $\mathcal W=\{W_n:n\in\mathbb N\}$ and $\mathcal V_n=\mathcal U_n\setminus \mathcal W$ for any $n\in\mathbb N$. Obviously, $\mathcal V_n$ is a disjoint open family, $\mathcal V_n\subseteq \mathcal V_{n+1}$ and $|\mathcal V_n|=\frak m_n$   for any $n\in\mathbb N$. Thus, the family $\mathcal G=\bigcup_{n=1}^\infty \mathcal V_n$ is disjoint and $|\mathcal G|=\frak m$. 
\end{proof}

In \cite[Theorem 4.2]{BeRuWa} the authors proved that any Eberlein compact includes into $c_0(\Gamma)$ with the pointwise topology such that $|\Gamma|=c(X)$. Formally,  they proof is incomplete, because they proved that for any cardinal $\mu<\lambda=|\Gamma|$  there is an open disjoint family of the cardinality $\ge\mu$ only. But if $\lambda$ is regular then this is not enough to assert that $c(X)=\lambda$. Of course,  we can easily to complete the proof for a regular $\lambda$. The authors also remarked in \cite[Remark (a) after Theorem 4.2]{BeRuWa} that  one can prove that there is a disjoint open family of cardinality $\mu$. In the next proposition we do it directly using their method and our Lemma~\ref{lem:DisjointSystem}.

\begin{proposition}\label{prop:SharpCelullatityEberlein}
	Let $X$ be an Eberlein compact. Then $c^\sharp(X)=c(X)^+$ and   $c(X)=d(X)=w(X)$.
\end{proposition}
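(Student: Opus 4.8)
The plan is to route the whole proof through the Benyamini--Rudin--Wage embedding of \cite{BeRuWa}, which gives a homeomorphic copy $X\subseteq c_0(\Gamma)$ (with the pointwise topology) and $|\Gamma|=c(X)=:\lambda$, and to reduce the sharp-cellularity equality to a single attainment question. First I would observe that $c^\sharp(X)=c(X)^+$ holds precisely when some disjoint family of nonempty open sets has cardinality exactly $c(X)$: since $|\mathcal U|\le c(X)$ for every disjoint open $\mathcal U$ we always have $c^\sharp(X)\le c(X)^+$, and one disjoint family of size $c(X)$ forces the reverse inequality. So the first assertion is exactly the statement that the supremum defining $c(X)$ is attained. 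In parallel I would fix, for $n\in\mathbb N$ and $\gamma\in\Gamma$, the open sets $W^n_\gamma=\{x\in X:|x(\gamma)|>1/n\}$; because each $x\in c_0(\Gamma)$ exceeds $1/n$ in only finitely many coordinates, every level $\mathcal W_n=\{W^n_\gamma:\gamma\in\Gamma\}\setminus\{\varnothing\}$ is point-finite, and $\bigcup_n\mathcal W_n$ separates points and generates the topology of $X$.

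The three-way equality $c(X)=d(X)=w(X)$ then drops out. In any space $c(X)\le d(X)\le w(X)$, since a dense set meets each member of a disjoint family in a distinct point, while choosing one point from each nonempty basic set produces a dense set. On the other side $X\subseteq c_0(\Gamma)\subseteq\mathbb R^\Gamma$ carries the product topology, whose weight is $|\Gamma|$, so $w(X)\le|\Gamma|=\lambda=c(X)$; the chain $c(X)\le d(X)\le w(X)\le c(X)$ collapses to equality.

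For the sharp cellularity I would split according to the cardinal $\lambda=c(X)$. If $X$ is finite the supremum is a maximum, and if $\lambda=\aleph_0$ then $X$ is an infinite Hausdorff space and so admits an infinite disjoint family of nonempty open sets; in both cases the supremum is attained. If $\lambda$ is singular or a successor cardinal, then $c^\sharp(X)=c(X)^+$ is precisely \cite[4.1]{Ju}; and when moreover $\operatorname{cf}(\lambda)=\aleph_0$ one can bypass \cite{Ju} by choosing disjoint subfamilies $\mathcal G_n\subseteq\mathcal W_n$ whose cardinalities are cofinal in $\lambda$ and feeding them to Lemma~\ref{lem:DisjointSystem}, which returns a single disjoint family of size $\lambda$. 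The only case untouched by these citations is $\lambda$ a regular limit (weakly inaccessible) cardinal, and this is where the real work lies: a countable supremum of cardinals below a regular uncountable $\lambda$ never reaches $\lambda$, so neither Lemma~\ref{lem:DisjointSystem} nor the singular case of \cite{Ju} applies.

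The main obstacle, then, is to manufacture a single disjoint open family of cardinality $\lambda$ when $\lambda$ is regular and uncountable. Here I would use $w(X)=\lambda=\sup_n|\mathcal W_n|$ together with $\operatorname{cf}(\lambda)>\aleph_0$ to locate a level $n$ with $|\mathcal W_n|=\lambda$, and then extract from the single point-finite family $\mathcal W_n$ a disjoint subfamily of full size $\lambda$. This extraction --- completing the argument of \cite{BeRuWa}, which only yields disjoint subfamilies of every size strictly below $|\mathcal W_n|$ --- is the delicate step, and it must exploit the weak compactness of $X$ rather than point-finiteness alone, since point-finiteness does not bound the intersection pattern of the $W^n_\gamma$: the idea is to shrink each $W^n_\gamma$ to a basic pointwise neighbourhood of a well-chosen witness point and then apply a $\Delta$-system reduction to the finite supports of these witnesses so as to separate $\lambda$-many of them inside $X$. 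Carrying this out produces a disjoint family of size $\lambda$, which completes the proof that the cellularity is attained and hence that $c^\sharp(X)=c(X)^+$.
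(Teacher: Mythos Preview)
Your treatment of $c(X)=d(X)=w(X)$ via the embedding $X\subseteq c_0(\Gamma)$ with $|\Gamma|=c(X)$ is fine, and it matches the paper's route in spirit (the paper uses Amir--Lindenstrauss and the effective support set $S$ in place of your $\Gamma$). The real issue is the sharp cellularity, specifically the regular limit case you isolate as ``the delicate step''.

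The $\Delta$-system reduction you sketch does not deliver what you need. After passing to a $\Delta$-system $\{F_\gamma:\gamma\in\Gamma'\}$ with root $R$ and pairwise disjoint petals $P_\gamma=F_\gamma\setminus R$, the basic open sets $U_\gamma=\{x\in X:|x(t)|>1/n\text{ for all }t\in P_\gamma\}$ certainly separate the \emph{witness points} $x_\gamma$ (since $x_{\gamma'}\notin U_\gamma$ for $\gamma'\ne\gamma$), but they need not be pairwise disjoint: nothing prevents some other $x\in X$ from having $|x(t)|>1/n$ simultaneously on $P_\gamma\cup P_{\gamma'}$. All you recover is that $\{U_\gamma\}$ is again point-finite, which is where you started. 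Compactness of $X$ in $c_0(\Gamma)$ does not bound $|S_n(x)|$ uniformly, so there is no finite ``order'' to exploit either. In short, the extraction you promise is not carried out, and the sketched mechanism does not produce disjoint open sets.

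The paper avoids the entire case split by a single device you did not use: \emph{maximality} of support sets. With $\mathcal S_n=\{S_n(x):x\in X\}$ and $\mathcal M_n$ its maximal elements (which exist because compactness of $X\subseteq c_0(T)$ forbids infinite increasing chains in $\mathcal S_n$), the open sets $G_M=\{x\in X:|x(t)|>1/n\text{ for all }t\in M\}$, $M\in\mathcal M_n$, are automatically pairwise disjoint: if $x\in G_M\cap G_N$ then $M\cup N\subseteq S_n(x)\in\mathcal S_n$, contradicting maximality of $M$ and $N$. Since $\bigcup\mathcal M_n=S_n$ and the $M$'s are finite, $|\mathcal M_n|=|S_n|$ whenever $S_n$ is infinite, so $\sup_n|\mathcal G_n|=|S|=\mathfrak m$. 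A single application of Lemma~\ref{lem:DisjointSystem} then yields a disjoint family of size $\mathfrak m$ --- and in the regular case this is trivial, because some $\mathcal G_n$ already has size $\mathfrak m$. No cofinality splitting, no appeal to \cite{Ju}, and no $\Delta$-system are needed.
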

\begin{proof} 
 By the Amir-Lindersrauss theorem \cite[Theorem 1]{AL} we may assume that $X$ be a subspace of $c_0(T)$ with the pointwise topology. Denote 
${S(x)=\big\{t\in T:x(t)\ne 0\big\}}$, $S=\bigcup_{x\in X}S(x)$, 
$S_n(x)=\big\{t\in T:|x(t)|>\tfrac1n\big\}$ and $S_n=\bigcup_{x\in X}S_n(x)$
for any $x\in X$ and $n\in\mathbb N$. Evidently, $S_n(x)$'s are finite and $S=\bigcup\limits_{n=1}^\infty S_n$. Set $\frak m=|S|$. Observe that $X$ is homeomorphic with the subspace $X|_S=\big\{x|_S:x\in X\big\}$ of $\mathbb R^S$. Therefore $w(X)\le w(\mathbb R^S)=\frak m\cdot\aleph_0$. 

In the case where $\frak m\le\aleph_0$ we have that $X$ is metrizable and separable. Thus,  $c(X)=d(X)=w(X)\le \aleph_0$ and  $c^\sharp(X)= c(X)^+$. 

Suppose that $\frak m>\aleph_0$. Then $w(X)\le\frak m$. Set
$\mathcal S_n=\big\{S_n(x):x\in X\big\}$  and 
	$\mathcal M_n=\big\{M:\text{$M$ is a maximal element of $\mathcal S_n$}\big\}$.
Since $X$ compact subset of $c_0(T)$, $\mathcal S_n$ does not contain an infinite increasing chain. Therefore, each element of $\mathcal S_n$ includes in some element of $\mathcal M_n$.  Consequently, $S_n=\bigcup\mathcal M_n$. Denote $\frak m_n=|\mathcal M_n|$. 
Since any element of $\mathcal M_n$ is finite,  $\frak m_n=|S_n|$ if $S_n$ is infinite. Thus, $\sup\limits_{n\in\mathbb N}\frak m_n=\frak m$. Fix $n\in\mathbb N$ and define for any $M\in\mathcal M_n$
$$
G_M=\big\{x\in X:|x(t)|>\tfrac1n\text{ for any }t\in M\big\}.
$$
The maximality of elements of $\mathcal M_n$ yields $G_M\cap G_N=\emptyset$  for any distinct sets $M,N\in\mathcal M_n$. Therefore, the disjoint open family  $\mathcal G_n=\big\{G_M:M\in\mathcal M_n\}$ has the cardinality $\frak m_n$. By Lemma~\ref{lem:DisjointSystem} there is a disjoint open system $\mathcal G$ in $X$ such that $|\mathcal G|=\frak m$. Therefore, $\frak m\le c(X)\le d(X)\le w(X)\le \frak m$. Thus, these cardinals are equal and $c^\sharp(X)=\frak m^+$.
\end{proof}

\section{The weight of compact subspaces of $S(X\times Y,Z)$}
\begin{proposition}\label{prop:DensityOfCompactInCp(X,M)}
	Let $X$ be a topological space, $M$ be a metrizable space and $K$ be a compact subspace of $C_p(X,M)$. Then $w(K)\le d(X)\cdot \aleph_0$. In particular, if $d(X)\ge\aleph_0$ then $w(K)\le d(X)$.
\end{proposition}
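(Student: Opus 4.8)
The plan is to embed $K$ into a product of compact metrizable spaces indexed by a dense subset of $X$, and then bound the weight of that product. First I would fix a dense set $D\subseteq X$ with $|D|=d(X)$ and consider the restriction map $\pi:C_p(X,M)\to M^D$, $\pi(f)=f|_D$. This map is continuous, being a projection in the pointwise topology, and it is injective on continuous functions: if $f,g\in C_p(X,M)$ agree on the dense set $D$, then they agree on all of $X$ because $M$ is Hausdorff. Since $K$ is compact and $M^D$ is Hausdorff, the restriction $\pi|_K$ is then a homeomorphism onto its image $K'=\pi(K)$, so $w(K)=w(K')$ and it suffices to estimate $w(K')$.

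Next I would use the compactness of $K$ to control each coordinate separately. For every $x\in D$ the $x$-th coordinate projection sends $K'$ onto $L_x=\{f(x):f\in K\}$, which is a continuous image of the compact space $K$ and hence a compact subset of the metrizable space $M$. A compact subset of a metrizable space is itself metrizable and separable, thus second countable, so $w(L_x)\le\aleph_0$ for each $x\in D$. Consequently $K'$ is a subspace of the product $\prod_{x\in D}L_x$, a product of at most $d(X)$ many second-countable spaces.

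Finally I would invoke the standard estimate for the weight of a product: a base of $\prod_{x\in D}L_x$ is obtained from finite choices of basic open sets in the factors, and since $D$ has $d(X)$ finite subsets (when $d(X)$ is infinite) and each factor has a countable base, one obtains $w\big(\prod_{x\in D}L_x\big)\le d(X)\cdot\aleph_0$. By monotonicity of weight under subspaces, $w(K')\le d(X)\cdot\aleph_0$, whence $w(K)\le d(X)\cdot\aleph_0$; the \emph{in particular} clause follows immediately, since $d(X)\cdot\aleph_0=d(X)$ whenever $d(X)\ge\aleph_0$.

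There is no deep obstacle here; the only points that need care are the embedding step (a continuous injection from a compact space into a Hausdorff space is an embedding, which is precisely what lets us replace $K$ by $K'$) and the product-weight computation. The genuinely useful input is that compactness forces each coordinate set $L_x$ to be second countable even though $M$ itself may have arbitrarily large weight; this is exactly what keeps the extra factor at $\aleph_0$ rather than at $w(M)$.
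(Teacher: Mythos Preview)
Your argument is correct and follows essentially the same route as the paper: restrict to a dense set $D$ of size $d(X)$, observe that the restriction map is a continuous injection and hence an embedding on the compact $K$, project coordinatewise to get compact metrizable (hence second-countable) factors $L_x$, and bound the weight of the product by $|D|\cdot\aleph_0$. The only differences are cosmetic---you spell out why the restriction map is injective and why it is a homeomorphism on $K$, whereas the paper records these as obvious and cites \cite[2.3.13]{En} for the product-weight estimate.
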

\begin{proof}
	Let $A$ be a dense subset of $X$ such that $|A|=d(X)$. Let ${\Phi:C_p(X,M)\to M^A}$ be defined by $\Phi(u)=u|_A$, $u\in C_p(X,M)$. Obviously, $\Phi$ is a continuous injection. Consequently, $K$ is homeomorphic to a subspace $L=\Phi(K)$ of $M^A$. Let $\mathrm{pr}_a:M^A\to M$ be the coordinate projection and $L_a=\mathrm{pr}_a(L)$ for any $a\in A$. Therefore, $L_a$'s are metrizable compacts and, hence, $w(L_a)\le\aleph_0$. By \cite[2.3.13]{En} we obtain that $$\textstyle w(K)=w(L)\le w\Big(\prod_{a\in A}L_a\Big)\le |A|\cdot\aleph_0=d(X)\cdot \aleph_0.$$
	If $d(X)\ge\aleph_0$ then $w(K)\le d(X)\cdot \aleph_0=d(X)$.
\end{proof}
\begin{theorem}\label{thm:WeightOfCompactSubspaceS}
	Let $X$, $Y$ be infinite compacts, $Z$ be a 
	metrizable space and  $K$ be a compact subspace of $S(X\times Y,Z)$. Then $w(K)<\min\big\{c^\sharp(X),c^\sharp(Y)\big\}$.
\end{theorem}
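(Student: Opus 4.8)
The plan is to establish the two inequalities $w(K)<c^\sharp(X)$ and $w(K)<c^\sharp(Y)$ separately and then take the minimum. Since the transposition $s\mapsto s^{\mathrm T}$, $s^{\mathrm T}(y,x)=s(x,y)$, is a homeomorphism of $S(X\times Y,Z)$ onto $S(Y\times X,Z)$ that carries crosses to crosses, it suffices to prove $w(K)<c^\sharp(X)$ and afterwards swap the two factors. So I fix the goal $w(K)<c^\sharp(X)$. First I would apply Lemma~\ref{lem:ReductionToEberlein} to replace $X$ by an Eberlein compact: it furnishes an Eberlein compact $\widetilde X$, a continuous surjection $g:X\to\widetilde X$, and a homeomorphic copy $\widetilde K$ of $K$ inside $S(\widetilde X\times Y,Z)$, so in particular $w(K)=w(\widetilde K)$. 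The argument then splits into a density bound for $w(\widetilde K)$ and a transfer of a witnessing disjoint family back to $X$ through $g$.

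The key step is the following density bound, valid for \emph{any} compact $\widetilde X$ and compact $Y$: every compact $L\subseteq S(\widetilde X\times Y,Z)$ satisfies $w(L)\le d(\widetilde X)\cdot\aleph_0$. To prove it I fix a dense set $A\subseteq\widetilde X$ with $|A|=d(\widetilde X)$ and let $M=C(Y,Z)$ carry the sup-metric $\rho(u,v)=\sup_{y\in Y}d\big(u(y),v(y)\big)$, which is a genuine metric because $Y$ is compact, so $M$ is metrizable. I consider
\[
R:L\to M^{A},\qquad R(s)=(s^{a})_{a\in A},\quad s^{a}(y)=s(a,y).
\]
The map $R$ is injective: if $s^{a}=t^{a}$ for every $a\in A$, then for each fixed $y$ the continuous functions $s_y,t_y:\widetilde X\to Z$ agree on the dense set $A$, hence everywhere, so $s=t$. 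Each coordinate $s\mapsto s^{a}$ is continuous into $M$: since $\widetilde X$ and $Y$ are pseudocompact, Proposition~\ref{prop:cru=cro} identifies the topology of $L$ with the cross-uniform one, and for any $b\in Y$ the formula \eqref{equ:metric_dc} gives $\rho(s^{a},t^{a})\le d_{(a,b)}(s,t)$, so a $d_{(a,b)}$-ball maps into a $\rho$-ball. Thus $R$ is a continuous injection of the compact space $L$ into the Hausdorff space $M^{A}$, i.e. an embedding. Finally, for each $a$ the projection $L_a=\{s^{a}:s\in L\}$ is a compact subspace of the metric space $M$, hence separable, so $w(L_a)\le\aleph_0$; since $R(L)\subseteq\prod_{a\in A}L_a$, the product estimate \cite[2.3.13]{En} already used in Proposition~\ref{prop:DensityOfCompactInCp(X,M)} yields $w(L)\le|A|\cdot\aleph_0=d(\widetilde X)\cdot\aleph_0$. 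Applying this to $L=\widetilde K$ gives $w(K)\le d(\widetilde X)\cdot\aleph_0$.

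Now I would finish using the Eberlein structure and the transfer through $g$. If $\widetilde X$ is finite, then $w(K)\le\aleph_0$, while the infinite compact (Hausdorff) space $X$ carries a countably infinite disjoint family of nonempty open sets, so $c^\sharp(X)\ge\aleph_1>w(K)$ and we are done. If $\widetilde X$ is infinite, then $d(\widetilde X)\ge\aleph_0$, whence $w(K)\le d(\widetilde X)$, and by Proposition~\ref{prop:SharpCelullatityEberlein} we have $d(\widetilde X)=c(\widetilde X)$; moreover its proof even produces a disjoint family $\mathcal V$ of nonempty open sets in $\widetilde X$ with $|\mathcal V|=c(\widetilde X)\ge w(K)$. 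Pulling $\mathcal V$ back through the continuous surjection $g$ gives $\mathcal U=\{g^{-1}(V):V\in\mathcal V\}$, a disjoint family of nonempty open sets in $X$ with $|\mathcal U|=|\mathcal V|\ge w(K)$. Hence $w(K)\le|\mathcal U|<|\mathcal U|^{+}\le c^\sharp(X)$. Swapping the roles of the factors yields $w(K)<c^\sharp(Y)$, and together these give $w(K)<\min\{c^\sharp(X),c^\sharp(Y)\}$.

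I expect the delicate point to be the density bound rather than the bookkeeping. The tempting shortcut is to embed $K$ into $C_p(\widetilde X,M)$ by $s\mapsto(\tilde x\mapsto s^{\tilde x})$ and quote Proposition~\ref{prop:DensityOfCompactInCp(X,M)} directly; but separate continuity only makes $\tilde x\mapsto s^{\tilde x}$ continuous into $C_p(Y,Z)$, not into the sup-metric space $M$, so that map need not land in $C_p(\widetilde X,M)$ at all. The remedy is to index the embedding by a dense \emph{set} $A$, discarding the topology of $\widetilde X$ on the index side, to recover injectivity from the continuity of the single-variable functions $s_y$, and to let the compactness of $K$ force each coordinate image $L_a$ to be a separable metric space — which is exactly what keeps the ambient product small. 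The second conceptual ingredient is that passing to the Eberlein compact $\widetilde X$ is what upgrades the density bound $w(K)\le d(\widetilde X)$ into a statement about cellularity, since $d(\widetilde X)=c(\widetilde X)$ is \emph{attained} there and can be pushed back to $X$.
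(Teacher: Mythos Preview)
Your argument is correct and follows the same skeleton as the paper's own proof: apply Lemma~\ref{lem:ReductionToEberlein} to pass to an Eberlein compact $\widetilde X$, bound $w(K)$ by $d(\widetilde X)\cdot\aleph_0$ via an embedding into a product of copies of the metric space $M=C(Y,Z)$, and finish with Proposition~\ref{prop:SharpCelullatityEberlein} together with the monotonicity of $c^\sharp$ under continuous surjections (which you carry out by explicitly pulling back a witnessing disjoint family, while the paper just writes $c^\sharp(\widetilde X)\le c^\sharp(X)$).

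The noteworthy point is your final paragraph. The ``tempting shortcut'' you warn against is exactly what the paper does: it declares $\Phi:S\to C_p(X,M)$ by $\Phi(s)(x)=s^x$ and then invokes Proposition~\ref{prop:DensityOfCompactInCp(X,M)} directly. Your objection is well taken --- for a merely separately continuous $s$ the assignment $x\mapsto s^x$ need not be continuous into $(M,\rho)$ (for instance $s(x,y)=2xy/(x^2+y^2)$ on $[0,1]^2$ has $\rho(s^x,s^0)=1$ for every $x>0$), so $\Phi(s)$ need not lie in $C(X,M)$ at all. Your remedy, embedding straight into $M^A$ indexed by a dense \emph{set} and recovering injectivity from the continuity of each $s_y$, is exactly the right repair; equivalently one may keep the paper's $\Phi$ but with codomain $M^X$ and rerun the proof of Proposition~\ref{prop:DensityOfCompactInCp(X,M)} verbatim, since that proof uses only pointwise evaluations and injectivity of restriction to a dense set, never continuity of the individual functions into $M$. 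So the slip in the paper is cosmetic rather than fatal, but your version is the clean one; your explicit handling of the degenerate case where $\widetilde X$ turns out finite is likewise a small tidying-up that the paper omits.
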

\begin{proof} Set $\frak m=w(K)$. It is enough to prove that $\frak m< c^\sharp(X)$. By Lemma~\ref{lem:ReductionToEberlein} there is an Eberlein compact $\widetilde X$ such that $K$ embeds into $S(\widetilde{X}\times Y,Z)$ and $\widetilde{X}$ is a continuous image of $X$. Consequently, $c^\sharp(\widetilde{X})\le c^\sharp(X)$. So, it will be enough to prove that $\frak m<c^\sharp(\widetilde{X})$. Thus, without loss of generality we may assume that $X$ is an Eberlein compact.
	
	Let $P=X\times Y$, $d$ be a metric which generates the topology of $Z$ and $(d_c)_{c\in P}$ be defined by (\ref{equ:metric_dc}). Then the cross-uniform topology on the space $S=S(P,Z)$ is generated by this family of pseudometrics. Consider the metric space $M=C(Y,Z)$ with the supremum metric $$\varrho(u,v)=\sup\limits_{y\in Y}d\big(u(y),v(y)\big),\ \  u,v\in M.$$
	Let  $\Phi:S\to C_p(X,M)$ be given by $\Phi(s)(x)(y)=s(x,y)$, $x\in X$, $y\in Y$, $s\in S$. Let $c=(a,b)\in P$, $s,t\in S$ and set $u=\Phi(s)$, $v=\Phi(t)$. Then
	$$
	\varrho\big(u(a),v(a)\big)=\sup\limits_{y\in Y}d\big(u(a)(y),v(a)(y)\big)
	=\sup\limits_{y\in Y}d\big(s(a,y),t(a,y)\big)\le d_c(s,t).
	$$
	Therefore, $\Phi$ is a continuous injection. Consequently, $K$ is homeomorphic to the subspace $\widetilde K=\Phi(K)$ of $C_p(X,M)$. Thus,  Propositions~\ref{prop:DensityOfCompactInCp(X,M)} and \ref{prop:SharpCelullatityEberlein} now leads to $\frak m=w(\widetilde{K})\le d(X)=c(X)<c(X)^+=c^\sharp(X)$.
\end{proof}
\section{Characterization of compact subspaces of $S(X\times Y,Z)$}
\begin{theorem}\label{thm:CharacterizationOfCompactSubspaceS}
	Let $X$, $Y$ be infinite compacts and $Z$ be a 
	metrizable space  containing a homeomorphic copy of $\mathbb R$.  Then a compact space $K$ homeomorphically embeds  into $S(X\times Y, Z)$ if and only if  ${w(K)<\min\big\{c^\sharp(X),c^\sharp(Y)\big\}}$.
\end{theorem}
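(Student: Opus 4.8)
The plan is to prove the two implications separately, putting all the work on sufficiency since necessity is already available. For the necessity: if $K$ embeds into $S(X\times Y,Z)$, then identifying $K$ with its image makes it a compact subspace, and Theorem~\ref{thm:WeightOfCompactSubspaceS} gives $w(K)<\min\{c^\sharp(X),c^\sharp(Y)\}$ at once. So the real content is the converse, which I would prove by an explicit construction.

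First I would set $\mathfrak m=w(K)$ and translate the hypothesis into usable combinatorial data. Since $c^\sharp(X)=\sup\{|\mathcal U|^+:\mathcal U\text{ disjoint open}\}$, the inequality $\mathfrak m<c^\sharp(X)$ yields a disjoint family $\mathcal U$ with $\mathfrak m<|\mathcal U|^+$, i.e. $\mathfrak m\le|\mathcal U|$; passing to a subfamily I fix pairwise disjoint nonempty open sets $\{U_\gamma:\gamma\in\Gamma\}$ in $X$ with $|\Gamma|=\mathfrak m$, and the same argument on $Y$ gives $\{V_\gamma:\gamma\in\Gamma\}$ over the \emph{same} index set $\Gamma$. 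As $K$ is compact Hausdorff of weight $\mathfrak m$, the Tychonoff embedding theorem lets me assume $K\subseteq[0;1]^\Gamma$, writing $k=(k_\gamma)_{\gamma\in\Gamma}$. Finally, because the cross-open topology is defined purely topologically (through the sets $O_{A,W}$, no metric involved), the pushforward $s\mapsto\iota\circ s$ along a fixed embedding $\iota:\mathbb R\hookrightarrow Z$ is a homeomorphic embedding of $S_{cro}(X\times Y,\mathbb R)$ into $S_{cro}(X\times Y,Z)$, so it suffices to embed $K$ into $S(X\times Y,\mathbb R)$.

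For the construction I would choose peak points $x_\gamma\in U_\gamma$, $y_\gamma\in V_\gamma$ and, using normality of the compacta, continuous bumps $\phi_\gamma:X\to[0;1]$, $\psi_\gamma:Y\to[0;1]$ with $\phi_\gamma(x_\gamma)=\psi_\gamma(y_\gamma)=1$ vanishing off $U_\gamma$, $V_\gamma$, and define $s_k(x,y)=\sum_{\gamma\in\Gamma}k_\gamma\phi_\gamma(x)\psi_\gamma(y)$. The key is the \emph{diagonal pairing}: since the $U_\gamma$ are disjoint and the $V_\gamma$ are disjoint, at most one summand is nonzero at each $(x,y)$, so the sum is well defined, and fixing $y\in V_\gamma$ (resp. $x\in U_\gamma$) leaves exactly one surviving index, making that section a scalar multiple of $\phi_\gamma$ (resp. $\psi_\gamma$); hence each $s_k$ is separately continuous. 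Reading off $s_k(x_\gamma,y_\gamma)=k_\gamma$ shows $k\mapsto s_k$ is injective. For continuity into the cross-uniform topology (which equals the cross-open one by Proposition~\ref{prop:cru=cro}) I would estimate the generating pseudometrics: for $c=(a,b)$ the restriction of $s_k-s_{k'}$ to the row $X\times\{b\}$ involves only the index $\gamma_0$ with $b\in V_{\gamma_0}$, and to the column $\{a\}\times Y$ only the index $\gamma_1$ with $a\in U_{\gamma_1}$, giving $d_c(s_k,s_{k'})\le\max\{|k_{\gamma_0}-k'_{\gamma_0}|,|k_{\gamma_1}-k'_{\gamma_1}|\}$, a bound on at most two coordinates. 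Since $K$ carries the product topology each coordinate is continuous, so $k\mapsto s_k$ is continuous; and as $S_{cro}(X\times Y,\mathbb R)$ is Hausdorff (distinct separately continuous functions differ at some $(a,b)$, which $d_{(a,b)}$ detects), a continuous injection from the compact $K$ is automatically a homeomorphic embedding.

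The one genuinely delicate step is arranging that every \emph{one-dimensional section} of $s_k$ depends on only finitely many coordinates of $k$: this is exactly what the diagonal pairing $U_\gamma\leftrightarrow V_\gamma$ of the two disjoint families secures, and it is precisely what forces continuity with respect to the cross topology, since an arbitrary pairing would let a single row meet many of the $V_\gamma$ and wreck the estimate. Everything else — the cardinal translation, separate continuity via disjointness, and upgrading the continuous injection to an embedding — is routine once this pairing is in place.
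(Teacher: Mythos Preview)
Your proof is correct and follows essentially the same route as the paper: both reduce to real-valued functions, choose disjoint open families $(U_\gamma)_{\gamma\in\Gamma}$ in $X$ and $(V_\gamma)_{\gamma\in\Gamma}$ in $Y$ indexed diagonally, build bump functions supported in $U_\gamma\times V_\gamma$, and exploit that any single cross $\mathrm{cr}\{(a,b)\}$ meets the supports of at most two bumps to bound $d_c$ by two coordinate differences. The only cosmetic differences are that the paper uses a single bump $f_i$ on $X\times Y$ where you use the product $\phi_\gamma(x)\psi_\gamma(y)$, and that the paper embeds the full Tychonoff cube $[0;1]^\Gamma$ rather than $K$ itself; neither changes the argument.
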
 
\begin{proof}
	The necessity was proved in Theorem~\ref{thm:WeightOfCompactSubspaceS}. Let us prove the sufficiency. Let $\frak m=\min\big\{c^\sharp(X),c^\sharp(Y)\big\}$ and $I$ be a set such that $|I|=\frak m$. Since $X$ and $Y$ are infinite compacts, $\frak m\ge\aleph_0$. By the Tychonoff embedding theorem \cite[2.3.23]{En} $K$ embeds into the the Tychonoff cube $Q=[0;1]^I$.  Without loss of generality we may assume that $\mathbb R\subseteq Z$. Consequently, $S(P)\subseteq S(P,Z)$.  So, it is enough to prove that $Q$ embeds into $S=S(P)$.
	
	Since $|I|=\frak m<\min\big\{c^\sharp(X),c^\sharp(Y)\big\}$, there are disjoint  families $(U_i)_{i\in I}$  and $(V_i)_{i\in I}$ of non-empty open sets in $X$ and $Y$ respectively. Let $i\in I$. Fix some points $a_i\in U_i$, $b_i\in V_i$ and set $c_i=(a_i,b_i)$. By the complete regularity of $X\times Y$ there is a continuous function $f_i:X\times Y\to[0;1]$ such that $f_i(c_i)=1$  and $f_i(p)=0$ for any $p\in P\setminus W_i$. Define a mapping $\Phi:Q\to \mathbb{R}^P$ by setting
	$$
	\Phi(u)(p)=\sum\limits_{i\in I}u_if_i(p),\ \ u=(u_i)_{i\in I}\in Q,\ \  p\in P.
	$$
	Since $f_i$'s vanish outsides of $W_i$ and $(W_i)_{i\in I}$ is disjoint, $\Phi$ is well defined. 
	
	Let ${c=(a,b)\in P}$ and $C=\mathrm{cr}\{c\}$. Then there are $i_0,j_0\in I$ such that 
	$a\notin U_i$ for any $i\ne i_0$ and $b\notin V_j$ for any $j\ne j_0$. Consequently, $C\cap W_i=\emptyset$ for any $i\notin\{i_0,j_0\}$. Therefore, $\Phi(u)(p)=u_{i_0}f_{i_0}(p)+u_{j_0}f_{j_0}(p)$ for any $u\in Q$ and
	$p\in C$. Thus,  $\Phi:Q\to S$. To prove the continuity of $\Phi$ consider a net $v_m=(v_{m,i})_{i\in I}$ which is convergent to $u=(u_i)_{i\in I}$ in $Q$. Therefore, $\Phi(v_m)(p)=v_{m,i_0}f_{i_0}(p)+v_{m,j_0}f_{j_0}(p)$  and $\Phi(u)(p)=u_{i_0}f_{i_0}(p)+u_{j_0}f_{j_0}(p)$ for any $m\in M$ and $p\in C$. Since $f_{i_0}(p),f_{j_0}(p)\in[0;1]$, we conclude that 
	\begin{align*}
		d_c\big(\Phi(v_m),\Phi(u)\big)
		&=\sup\limits_{p\in C}\Big|\big(v_{m,i_0}-u_{i_0}\big)f_{i_0}(p)+\big(v_{m,j_0}-u_{j_0}\big)f_{j_0}(p)\Big|\\
		&\le\sup\limits_{p\in C}\Big(f_{i_0}(p)\big|v_{m,i_0}-u_{i_0}\big|+f_{j_0}(p)\big|v_{m,j_0}-u_{j_0}\big|\Big)\\
		&\le\big|v_{m,i_0}-u_{i_0}\big|+\big|v_{m,j_0}-u_{j_0}\big|\to 0
	\end{align*}
	Hence, $\Phi$ is continuous. But $\Phi$ is injective and $Q$ is compact. Thus, $Q$ is homeomorphic to the subspace $\Phi(Q)$ of $S$.
\end{proof}
\begin{corollary}\label{cor:CharacterizationOfCompactSubspaceS}
	Let $X$, $Y$ be infinite metrizable  compacts and $Z$ be a  
	metrizable space containing a homeomorphic copy of $\mathbb R$.  Then a compact space $K$ homeomorphically embeds  into $S(X\times Y, Z)$ if and only if  it is metrizable.
\end{corollary}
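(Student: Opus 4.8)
The plan is to read off the corollary from Theorem~\ref{thm:CharacterizationOfCompactSubspaceS} by evaluating the cardinal $\min\{c^\sharp(X),c^\sharp(Y)\}$ explicitly under the extra hypothesis that $X$ and $Y$ are infinite \emph{metrizable} compacts. That theorem already asserts that a compact $K$ embeds into $S(X\times Y,Z)$ if and only if $w(K)<\min\{c^\sharp(X),c^\sharp(Y)\}$, so the whole task reduces to showing this minimum is $\aleph_1$ and then translating the resulting weight inequality into metrizability of $K$.

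First I would compute $c^\sharp(X)=\aleph_1$ (and symmetrically $c^\sharp(Y)=\aleph_1$). Since $X$ is a metrizable compact, it is an Eberlein compact, so Proposition~\ref{prop:SharpCelullatityEberlein} applies and yields $c^\sharp(X)=c(X)^+$ together with $c(X)=d(X)=w(X)$. It then remains to observe that $c(X)=\aleph_0$: as an infinite compact metric space $X$ is separable, whence $d(X)=\aleph_0$ and therefore $c(X)=\aleph_0$, giving $c^\sharp(X)=\aleph_0^+=\aleph_1$. One may instead argue directly, bypassing the Eberlein property: for a metric space cellularity coincides with density, and an infinite compact metric space carries a countably infinite disjoint family of nonempty open sets (shrink balls around a nontrivial convergent sequence) while admitting no uncountable one, which forces $c^\sharp(X)=\aleph_1$. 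Either way, $\min\{c^\sharp(X),c^\sharp(Y)\}=\aleph_1$.

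Substituting this value into Theorem~\ref{thm:CharacterizationOfCompactSubspaceS}, the embedding criterion becomes $w(K)<\aleph_1$, that is $w(K)\le\aleph_0$. Finally I would identify this condition with metrizability of $K$: a compact Hausdorff space of countable weight is second countable, hence metrizable by the Urysohn metrization theorem, and conversely a metrizable compact is separable metric, hence second countable with $w(K)\le\aleph_0$. This closes the equivalence.

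I would expect no real obstacle, since the substantive content lives entirely in Theorem~\ref{thm:CharacterizationOfCompactSubspaceS} and Proposition~\ref{prop:SharpCelullatityEberlein}. The single point meriting care is the evaluation of $c^\sharp(X)$: because $\aleph_0$ is a regular limit cardinal, the general Juh\'asz formula ``$c^\sharp=c^+$ for singular or successor $c$'' does not formally cover the case $c(X)=\aleph_0$, so one must rely either on Proposition~\ref{prop:SharpCelullatityEberlein} (which is valid for all Eberlein compacts) or on the direct observation that a countably infinite disjoint open family exists while no larger one does.
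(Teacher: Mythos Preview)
Your proposal is correct and is exactly the intended derivation: the paper states the corollary immediately after Theorem~\ref{thm:CharacterizationOfCompactSubspaceS} without proof, treating it as a direct consequence once one notes that an infinite metrizable compact has $c^\sharp=\aleph_1$ and that $w(K)\le\aleph_0$ is equivalent to metrizability for compact Hausdorff $K$. Your added care about justifying $c^\sharp(X)=\aleph_1$ (via Proposition~\ref{prop:SharpCelullatityEberlein} or directly) is appropriate and more detailed than the paper itself provides.
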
 
\section{Open problems}
The compactness play the essential role in our constructions. So, the next problem seems quite difficult.  
\begin{problem}
	Let $X$, $Y$ be topological spaces and $Z$ be a topological (resp. metric) space. Describe a compact $K$ which  homeomorphically includes into $S_{cro}(X\times Y,Z)$  (resp.  $S_{cru}(X\times Y,Z)$).
\end{problem}
Theorem~\ref{thm:CharacterizationOfCompactSubspaceS} gives the answer to the previous problem in the case where $X$ and $Y$ are infinite compacts and $Z$ is a metrizable space containing a homeomorphic copy of $\mathbb R$. The following two questions are more concrete versions of the previous problem.  
\begin{problem}
	Let $K$ be a Rosenthal compact. Do there exist Polish spaces $X$ and $Y$ such that $K$ homeomorphically includes into $S_{cro}(X\times Y)$  (resp.  $S_{cru}(X\times Y)$)?
\end{problem}
\begin{problem}
	Do there exist Polish spaces  $X$ and $Y$ such that there is a non-metrizable compact subspace $K$  of $S_{cro}(X\times Y)$  (resp. $S_{cru}(X\times Y)$)?
\end{problem}
By Corollary~\ref{cor:CharacterizationOfCompactSubspaceS} Polish spaces in two previous problems should be non-compact.
\begin{problem}
	Does there exist a non-metrizable  space $Y$ such that an arbitrary compact $K$ is an Eberlein compact if and only if it homeomorphically includes into $C_p(X,Y)$ for some compact $X$? 
\end{problem}

By Proposition~\ref{prop:EberleinCp(X,Y)} a metrizable space $Y\supseteq \mathbb R$ has the property from the previous problem.
\section*{Acknowledgments}
The authors would like to appreciate Taras Banakh for the suggestion to use the sharp cellularity.




\end{document}